\newcommand{\del}{\partial}
\newcommand{\ve}{\bm{e}}
\newcommand{\vp}{\bm{p}}
\newcommand{\vq}{\bm{q}}
\newcommand{\vu}{\bm{u}}
\newcommand{\vv}{\bm{v}}
\newcommand{\vw}{\bm{w}}
\newcommand{\vx}{\bm{x}}
\newcommand{\vF}{\bm{F}}
\newcommand{\vzero}{\bm{0}}
\newcommand{\bx}{\bar{x}}
\newtheorem{thm}{Theorem}
\newtheorem{lem}{Lemma}
\newtheorem{prop}{Proposition}
\newtheorem{defn}{Definition}
\begin{document}
\title{Singularity-Avoiding Multi-Dimensional Root-Finder}

\author{Hirotada Okawa}\ead{h.okawa@aoni.waseda.jp}
\address{Waseda Institute for Advanced Study, Waseda University, Tokyo 169-0051, Japan}
\author{Kotaro Fujisawa}
\address{Department of Physics, Graduate School of
Science, University of Tokyo, Bunkyo-ku, Tokyo 113-0033, Japan}
\author{Yu Yamamoto}
\address{Research Institute for Science and Engineering, Waseda University, Tokyo 169-8555, Japan}
\author{Nobutoshi Yasutake}
\address{Physics Department, Chiba Institute of Technology, Chiba 275-0023, Japan}
\address{Advanced Science Research Center, Japan Atomic Energy Agency, Tokai, Ibaraki 319-1195, Japan}
\author{Misa Ogata}
\address{Research Institute for Science and Engineering, Waseda University, Tokyo 169-8555, Japan}
\author{Shoichi Yamada}
\address{Research Institute for Science and Engineering, Waseda University, Tokyo 169-8555, Japan.}
% \address{Science and Engineering, Waseda University, Tokyo 169-8555, Japan.}

\date{\today} 

\begin{abstract}
 We proposed in this paper a new
 method, which we named the W4 method, to solve nonlinear equation systems.
 It may be regarded as an extension of the Newton-Raphson~(NR) method
 to be used when the method fails.
 Indeed our method can be applied not only to ordinary problems
 with non-singular Jacobian matrices
 but also to problems with singular Jacobians,
 which essentially all previous methods that employ
 the inversion of the Jacobian matrix have failed to solve.
 In this article,
 we demonstrate that (i) our new scheme
 can define a non-singular iteration map even for those problems by
 utilizing the singular value decomposition,
 (ii) a series of vectors in the new iteration map converges to the
 right solution under a certain condition,
 (iii) the standard two-dimensional problems in the
 literature that no single method proposed so far has been able to solve completely
 are all solved by our new method.
\end{abstract}

% \pacs{
% 02.60.Cb,% Numerical simulation; solution of equations 
% 02.70.−,% Computational techniques; simulations 
% 02.90.+p,% Other topics in mathematical methods in physics
% }
\maketitle

%%%%%%%%%%%%%%%%%%%%%%%%%%%%%%%%%%%%%%%%%%%%%%%%%%%%%%%%%%%%%%%%%%%%%%%%%%%%
\section{Introduction}\label{sec:intro}
%%%%%%%%%%%%%%%%%%%%%%%%%%%%%%%%%%%%%%%%%%%%%%%%%%%%%%%%%%%%%%%%%%%%%%%%%%%%
%
The root-finding of functions is one of the most important problems
in computational science and engineering.
It is a nontrivial task, however, to numerically find the root of a system of
nonlinear equations:
\begin{eqnarray}
 \vF(\vx) = \mathbf{0},\label{eq:nonlinearEQs}
\end{eqnarray}
where $\vx \in \mathbb{R}^{N}$, $N\in \mathbb{Z}$
and $\bm{F}: \mathbb{R}^{N}\rightarrow\mathbb{R}^{N}$.
Although the single-variable problem is rather simple,
it becomes particularly difficult when $N>1$.
The Newton-Raphson~(NR) method may be the first choice,
since it is well-known to give a solution
as long as the initial guess is sufficiently close to the
solution\cite{ortega1970,kelley2003}.
Another advantage for the NR method is its quadratic convergence to the
solution if the Jacobian matrix for the system of nonlinear
equations~\eqref{eq:nonlinearEQs} is non-singular.
Many attempts done so far to further accelerate the convergence: 
Halley's and Householder's methods
for single-variable problems\cite{householder1970}
and Ramos\& Monteiro's method for multi-variable problems to mention a few~\cite{ramos2015,ramos2017}.

There are demerits in the NR method, though.
Very heavy computational
cost in the inversion of Jacobian matrix for large system dimensions is
one of them: the operation number
 scales as $\mathcal{O}\left(N^3\right)$.
Many efforts have been successfully made to reduce
the cost to $\mathcal{O}\left(N^2\right)$\cite{broyden1965,kelley2003}.
Another common disadvantage for the NR and other quasi-Newton methods
is the strong dependence of convergence on the initial condition for iteration.
It is crucial indeed for
whether the iteration can reach a solution or not.
We commonly come across situations, in which the iteration is simply
divergent or suffers from permanent oscillations.
Recently, we proposed a new method referred to as the W4 method to
circumvent such difficulties.
We have shown
that the W4 method is able to obtain a solution even if the NR method fails\cite{Okawa:2018smx}.
In fact, the W4 method has been successfully applied
to some physical problems
already\cite{Fujisawa:2018dnh,Suzuki:2020zbg,Hirai:2020sjg}.

The existence of singularity in the Jacobian matrix
is a different issue.
If the Jacobian matrix is singular at the solution,
the convergence is slowed down severely and
we are required to take some measure to reaccelerate it\cite{kou2006,wu2007,hueso2009}.
If the Jacobian is singular either at the initial guess or at
intermediate steps in the iteration, the problem is much more serious,
since one cannot invert the Jacobian matrix and hence cannot define the iteration map to the
next step.
In the existing multi-variable root finders, to the best of our
knowledge,
one needs to somehow modify (normally by hand) the initial guess or the
intermediate results in such situations.
In this article, we try to deal with this difficulty in the framework of
the W4 method and establish
the foundation of a globally convergent multi-variable root-finder.
We hence focus on two dimensional problems with singular Jacobian
matrices in this article.

%%% Contents
The paper is organized as follows.
We first present our new scheme to find roots of nonlinear
equation systems in the framework of the W4 method in Sec.~\ref{sec:w4sv},
clarifying how the singular nature of the associated
Jacobian matrix is handled.
We also show that our method is applicable to 
ordinary non-singular problems as well.
In Sec.~\ref{sec:results}, we demonstrate the capabilities of the new method
by applying it to the standard test problems in the literature
comparing the results with those of other methods.
Finally, we summarize our findings and comment on future prospects in Sec.~\ref{sec:conclusion}.

%%%%%%%%%%%%%%%%%%%%%%%%%%%%%%%%%%%%%%%%%%%%%%%%%%%%%%%%%%%%%%%%%%%%%%%%%%%%
\section{Singularity Avoidance with the W4 method}\label{sec:w4sv}
%%%%%%%%%%%%%%%%%%%%%%%%%%%%%%%%%%%%%%%%%%%%%%%%%%%%%%%%%%%%%%%%%%%%%%%%%%%%
In this paper, we consider two dimensional nonlinear equations,
which are expressed in general as
\begin{eqnarray}
 \vF (x,y) \equiv
 \begin{pmatrix}
  f_{x}(x,y)\\
  f_{y}(x,y)
 \end{pmatrix}
 =
 \vzero .
\end{eqnarray}
For iterative solvers, such as 
the Newton-Raphson method and the W4 method, 
it is normally necessary to calculate the Jacobian matrix associated with the
system of equations\footnote{Some quasi-Newton methods do not
require the inversion of Jacobian matrix explicitly.
It is essential
for numerical stability, however,
to make the algorithm as close to the original inversion as possible.\cite{broyden1965}}:
\begin{eqnarray}
 J =
  \begin{pmatrix}
   \frac{\del f_x}{\del x} & \frac{\del f_x}{\del y}\\
   \frac{\del f_y}{\del x} & \frac{\del f_y}{\del y}
  \end{pmatrix}.
\end{eqnarray}
We call the Jacobian singular when $\det J=0$.
In our previous article~\cite{Okawa:2018smx},
we proposed a new multi-dimensional
root-finding scheme, the W4 method, and demonstrated that
it can solve some problems that
the Newton-Raphson method fails to solve.
As shown, the W4 method
with the UL~\cite{Okawa:2018smx} or
the LH decomposition~\cite{Fujisawa:2018dnh} has
a tendency to leap over singularities 
by inertia.
We observed, however, that even the W4 method is stalled sometimes,
particularly when the initial guess has a singular Jacobian.
The new method we propose in this article solves this problem.
Below we explain how we define the iteration map in this method for such situations, i.e., when a
singularity is encountered either at the initial step or 
at some intermediate steps in the iteration.
\subsection{Eigendecomposition} \label{sec:eigendecomp}
At first, we look at some relevant properties of the
 Jacobian matrix that may be singular.
Suppose we have a $2\times 2$ real matrix~$A$.
The two eigenvalues for this matrix~$A$ are given as
\begin{eqnarray}
 \lambda_{\pm} = \frac{1}{2}\left(\alpha\pm\sqrt{\alpha^2-4\beta}\right),\label{eq:lambdapm}
\end{eqnarray}
where we define $\alpha:=\mathrm{tr} A$ and $\beta:=\det A$.
Assuming eigenvectors~$\vw_{\pm}$ for $\lambda_{\pm}$,
respectively,
we decompose the Jacobian into
\begin{eqnarray}
 A = P \Lambda P^{-1} = 
  \begin{pmatrix}
   \vw_{+} & \vw_{-} 
  \end{pmatrix}
  \begin{pmatrix}
   \lambda_{+}& 0\\
   0 & \lambda_{-}
  \end{pmatrix}
  \begin{pmatrix}
   \vw_{+}^{T}\\
   \vw_{-}^{T}
  \end{pmatrix}.
\end{eqnarray}
Note that the eigenvalues are generally complex unless the Jacobian is symmetric.

For a singular Jacobian~$J$,
one eigenvalue is $\lambda_{+}=\alpha$ and the other is $\lambda_{-}=0$
since $\beta=\det J=0$ by definition from Eq.~\eqref{eq:lambdapm}.
Then the singular Jacobian can be rewritten as
\begin{eqnarray}
 J = \lambda_{+} \vw_{+}\vw_{+}^{T}.
\end{eqnarray}
\subsection{Singular Value Decomposition} \label{sec:SVD}
As mentioned above, there may not exist all the eigenvectors.
We hence consider the singular value decomposition 
of the Jacobian.  The following equations:
\begin{eqnarray}
 J\vv = \sigma \vu,\\
 J^{T}\vu = \sigma \vv,
\end{eqnarray}
yield
\begin{eqnarray}
 J^{T}J\vv = \sigma^2\vv,\\
 JJ^{T}\vu = \sigma^2\vu.
\end{eqnarray}
In these equations, $\sigma$'s are called 
the singular values of $J$. Since they are
defined as
the positive square root of $\sigma^2$,
the eigenvalues of positive-semidefinite symmetric matrix
of $J^{T}J$ or $JJ^{T}$, the corresponding vectors, $\vu$ and $\vv$,
always exist.
\begin{lem}
 Suppose a $2\times 2$ Jacobian matrix is singular, then
 one of the singular values is at least zero.
\end{lem}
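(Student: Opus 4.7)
The plan is to show that singularity of $J$ forces at least one singular value to vanish (which I read as the content of the lemma, since singular values are non-negative by definition). I would attack this in two slightly different ways depending on whether one wants to go through $\det$ or through the nullspace of $J$.

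First, I would observe that by the SVD setup laid out just above the lemma, the singular values $\sigma_i$ are the nonnegative square roots of the eigenvalues of $J^{T}J$. In the $2\times 2$ case, the product of these eigenvalues is $\det(J^{T}J)=\det(J^{T})\det(J)=(\det J)^{2}$. Since $J$ is assumed singular, $\det J=0$, and hence $\sigma_{1}^{2}\sigma_{2}^{2}=0$, which forces at least one of $\sigma_{1},\sigma_{2}$ to be zero. This is the cleanest route and only uses elementary facts about determinants together with the definition of the singular values.

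As a sanity check and alternative, I would also sketch the nullspace argument: singularity of $J$ gives a nonzero $\vv\in\mathbb{R}^{2}$ with $J\vv=\vzero$, so $J^{T}J\vv=\vzero$ as well, meaning $\vv$ is an eigenvector of $J^{T}J$ with eigenvalue $0$. Reading off through the relation $J^{T}J\vv=\sigma^{2}\vv$ displayed in Sec.~\ref{sec:SVD}, this corresponds to $\sigma=0$, giving the claim directly from the definitions just introduced.

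I do not expect a real obstacle here; the only minor point to handle with care is to make sure the statement is phrased in a way that matches the paper's convention that singular values are the nonnegative square roots (so ``at least zero'' in the lemma is effectively ``equals zero''). I would therefore begin the proof by recalling this sign convention explicitly, then present the determinant argument as the main line, and mention the nullspace viewpoint as a one-line remark so the reader sees why the vanishing singular value is geometrically the direction along which $J$ collapses.
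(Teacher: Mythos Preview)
Your proposal is correct and your main determinant argument is essentially the paper's proof: both observe that $\det(J^{T}J)=(\det J)^{2}=0$ and then conclude that the smaller eigenvalue of $J^{T}J$ vanishes. The only cosmetic difference is that the paper reads this off by plugging $\beta=0$ into the explicit 2$\times$2 eigenvalue formula~\eqref{eq:lambdapm}, whereas you invoke the equivalent fact that the product of the eigenvalues equals the determinant; your nullspace remark is an extra that the paper does not include.
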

\begin{proof}
 When the Jacobian~$J$ is singular, i.e., $\det J=0$,
 we obtain $\beta = \det A = \det (J^TJ) = (\det J)^2 =0$
 for the matrix~$A=J^TJ$.
Then from Eq.~\eqref{eq:lambdapm},
 the square of the smaller singular
 value is given as $\sigma^2_{-}=(\alpha-\sqrt{\alpha^2 -4\beta})/2=0$.
\end{proof}
Let $U=[\vu_{+}\ \vu_{-}]$ and $V=[\vv_{+}\ \vv_{-}]$
be the orthogonal matrices given by the two independent eigenvectors
associated with $JJ^T$ and $J^TJ$, respectively,
and $\Sigma=\mathrm{diag} [\sigma_{+},\ \sigma_{-}]$ be the diagonal matrix.
The Jacobian can be decomposed as
\begin{eqnarray}
 J = U\Sigma V^{-1} =
  \begin{pmatrix}
   \vu_{+}& \vu_{-}
  \end{pmatrix}
  \begin{pmatrix}
   \sigma_{+} & 0\\
   0 & \sigma_{-}
  \end{pmatrix}
  \begin{pmatrix}
   \vv_{+}^{T}\\
   \vv_{-}^{T}
  \end{pmatrix},
  \label{eq:Jsinglar}
\end{eqnarray}
which is the singular value decomposition of $J$.
\begin{lem}
 Suppose a $2\times 2$ Jacobian is singular,
 then it can be expressed only by the right-singular and
 left-singular vectors, $\vv_{+}$ and $\vu_{+}$, which correspond
to the
 larger singular value~$\sigma_{+}$.
\end{lem}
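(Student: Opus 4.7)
The plan is to reduce this statement to a direct computation from the singular value decomposition \eqref{eq:Jsinglar} combined with the previous lemma. First I would invoke the preceding lemma, which already guarantees that the smaller singular value vanishes, $\sigma_{-}=0$, whenever the Jacobian is singular. This is the only substantive input needed, and everything else is bookkeeping.

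Next I would expand the SVD \eqref{eq:Jsinglar} by carrying out the matrix products on the right-hand side. The outer-product form of the SVD gives
\[
J \;=\; U\Sigma V^{-1} \;=\; \sigma_{+}\vu_{+}\vv_{+}^{T} \;+\; \sigma_{-}\vu_{-}\vv_{-}^{T},
\]
and substituting $\sigma_{-}=0$ from Lemma 1 makes the second rank-one term disappear, leaving
\[
J \;=\; \sigma_{+}\vu_{+}\vv_{+}^{T},
\]
which is precisely the claimed representation involving only the singular vectors associated with the larger singular value.

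I do not anticipate any real obstacle. The existence of the full set of singular vectors $\vu_{\pm}$ and $\vv_{\pm}$ is guaranteed by the positive semi-definiteness and symmetry of $J^{T}J$ and $JJ^{T}$, as already observed in the text preceding Lemma 1, so the expansion above is well-defined even in the degenerate case. The content of Lemma 2 is therefore essentially a restatement of Lemma 1 in a more geometrically useful form — one that isolates the single surviving singular direction, and which will presumably be exploited later to construct a non-singular iteration map by working in the $(\vu_{+},\vv_{+})$ subspace while treating the $(\vu_{-},\vv_{-})$ direction separately.
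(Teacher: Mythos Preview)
Your proposal is correct and follows exactly the paper's own argument: invoke Lemma~1 to obtain $\sigma_{-}=0$, then read off $J=\sigma_{+}\vu_{+}\vv_{+}^{T}$ from the outer-product expansion of the SVD~\eqref{eq:Jsinglar}. The paper's proof is in fact the one-line version of what you wrote.
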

\begin{proof}
 Since $\sigma_{-}=0$,
 Eq.~\eqref{eq:Jsinglar} yields $J=\sigma_{+}\vu_{+}\vv_{+}^T$.
\end{proof}
Note that, in general, the singular values~$\sigma$ differ from
the eigenvalues~$\lambda$
and they coincide with each other 
if the Jacobian is symmetric~$J=J^{T}$.

\subsection{W4 with singular value decomposition}\label{ssec:W4}
The generic form of
 the iteration map of the W4 method for variable $\vx^{(n)}$
and auxiliary variable~$\vp^{(n)}$
at the $n$-step is given as
\begin{subequations}
 \label{eq:W4map}
 \begin{align}
  \bm{x}^{(n+1)} &= \bm{x}^{(n)} +\Delta\tau X \bm{p}^{(n)},
  \label{eq:W4map_x}\\
  \bm{p}^{(n+1)} &= \left(1-2\Delta\tau\right)\bm{p}^{(n)}
  -\Delta\tau Y\bm{F}(\bm{x}^{(n)}),
  \label{eq:W4map_p}
 \end{align}
\end{subequations}
where $X$ and $Y$ are preconditioner matrices,
which we can choose at our disposal in principle.
Linearizing the above nonlinear map at
the solution~($\bm{x}=\bm{x}^{*}$ and $\bm{p}=\bm{0})$
and introducing the errors as
$\bm{e}_{x}^{(n)}:=\bm{x}^{*}-\bm{x}^{(n)}$
and $\bm{e}_{p}^{(n)}:=\vzero-\bm{p}^{(n)}$,
we obtain the error propagation equations:
\begin{subequations}
 \label{eq:W4map_linear}
  \begin{align}
   \bm{e}_{x}^{(n+1)} =& \bm{e}_{x}^{(n)} - X \Delta\tau \bm{e}_{p}^{(n)},\\
   \bm{e}_{p}^{(n+1)} =& \left(1 -2\Delta\tau\right)\bm{e}_{p}^{(n)}
   +YJ \Delta \tau\bm{e}_{x}^{(n)},
  \end{align}
\end{subequations}
which are rewritten as
\begin{eqnarray}
 \bm{e}_{z}^{(n+1)} = W \bm{e}_{z}^{(n)},\quad
 \bm{e}_{z}^{(n)} :=
 \begin{pmatrix}
  \bm{e}_{x}^{(n)}\\
  \bm{e}_{p}^{(n)}
 \end{pmatrix}
 ,\quad
 W :=
 \begin{bmatrix}
  I & -\Delta\tau X\\
  \Delta\tau Y J & (1-2\Delta\tau)I
 \end{bmatrix}
 ,\label{eq:W4_matrix}
\end{eqnarray}
where $I$ denotes the $N\times N$ identity matrix, with
$N$ being the number of nonlinear equations.
\begin{lem}
 Suppose there exists a complete set of
 eigenvectors~$\vv_i\in\mathbb{R}^{2N}$
 of the matrix~$W$ in Eq.~\eqref{eq:W4_matrix}
 and let $Q$ be a $2N\times 2N$ matrix composed of
 $\vv_i$ and $d_i$ be the eigenvalues corresponding to $\vv_i$,
 then the norm of the error vector
 always decreases if
 $|d_{max}|<1$ for the maximum eigenvector $d_{max}$.\label{lem:error}
\end{lem}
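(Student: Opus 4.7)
The plan is to diagonalize $W$ using its complete eigenbasis and read off the convergence of $\|\ve_z^{(n)}\|$ from the powers of its eigenvalues. Since $Q$ is invertible by hypothesis, we can write $W = Q D Q^{-1}$ with $D = \mathrm{diag}(d_1,\ldots,d_{2N})$, and the linear recursion in Eq.~\eqref{eq:W4_matrix} iterates to $\ve_z^{(n)} = W^{n}\ve_z^{(0)} = Q D^{n} Q^{-1}\ve_z^{(0)}$.

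First I would expand the initial error in the eigenbasis by setting $\bm{c} := Q^{-1}\ve_z^{(0)}$, so that $\ve_z^{(0)} = \sum_{i} c_i \vv_i$ and hence $\ve_z^{(n)} = \sum_{i} c_i d_i^{n}\vv_i$. The triangle inequality then gives
\[
\|\ve_z^{(n)}\| \;\le\; \sum_{i=1}^{2N} |c_i|\,|d_i|^{n}\,\|\vv_i\| \;\le\; |d_{\max}|^{n}\sum_{i=1}^{2N} |c_i|\,\|\vv_i\|.
\]
The factor multiplying $|d_{\max}|^{n}$ is a finite constant determined only by the initial data, so the right-hand side shrinks monotonically to zero whenever $|d_{\max}|<1$, and therefore $\|\ve_z^{(n)}\| \to 0$.

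The main obstacle is that a monotone step-by-step decrease is not automatic in any pre-specified norm: for a non-normal $W$, the standard Euclidean norm of $\ve_z^{(n)}$ can transiently grow even when the spectral radius is strictly below unity. I would address this by either (i) adopting the norm $\|\vx\|_{Q} := \|Q^{-1}\vx\|_{\infty}$, in which $W$ acts as a strict contraction with factor $|d_{\max}|$ and the error therefore shrinks at every iteration; or (ii) reading the statement in the standard asymptotic sense used in the analysis of iterative solvers, namely that $\|\ve_z^{(n)}\|\to 0$ as $n\to\infty$. Under either interpretation, $|d_{\max}|<1$ is the natural sufficient condition, and the spectral argument above closes the proof.
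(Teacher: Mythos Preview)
Your argument is correct, and in fact more careful than the paper's on the key point you flag. The paper's proof takes a different tack: it aims for a strict one-step decrease of the Euclidean norm. Writing $W = Q^{-1}DQ$ and setting $\vq := Q\ve_z^{(n)}$, it computes
\[
|\ve_z^{(n+1)}|^2 \;=\; (Q^{-1}DQ\ve_z^{(n)})^T Q^{-1}DQ\ve_z^{(n)} \;=\; \vq^T D^2 \vq \;<\; \vq^T\vq \;=\; |Q\ve_z^{(n)}|^2 \;=\; |\ve_z^{(n)}|^2,
\]
using $|d_i|<1$ for the inequality and invoking $Q^T = Q^{-1}$ for both the second and the last equalities. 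That orthogonality of the eigenbasis is an extra hypothesis beyond the ``complete set of eigenvectors'' stated in the lemma; without it, as you correctly observe, the Euclidean norm can grow transiently even when the spectral radius is below one. Your option~(i), passing to the $Q$-adapted norm, recovers the same monotone-contraction conclusion without that tacit assumption, while option~(ii) gives the weaker asymptotic statement in full generality. So the paper's route buys a clean one-line inequality in the standard norm at the price of an implicit normality assumption on $W$, whereas your route trades that for validity over any complete eigenbasis.
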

\begin{proof}
 The matrix~$W$ can be decomposed as $W=Q^{-1}DQ$ in terms of the matrix
 $Q:=[\vv_1 \vv_2 \cdots \vv_{2N}]$ and the diagonal matrix
 $D:=\mathrm{diag}[d_1,d_2,\cdots,d_{2N}]$.
 Then the error propagates from the $n$-step to the $(n+1)$-step as
 \begin{eqnarray}
  \ve^{(n+1)}_{z} = W\ve^{(n)}_{z}
    = Q^{-1}DQ\ve^{(n)}_{z}.
 \end{eqnarray}
 Defining the auxiliary vector~$\vq:=Q\ve^{(n)}_z$
 for notational convenience,
 we evaluate the norm of the error at the $(n+1)$-step:
 \begin{eqnarray}
  \mid \ve^{(n+1)}_{z} \mid^2
   &=& \left(Q^{-1}DQ\ve^{(n)}_{z}\right)^T Q^{-1}DQ\ve^{(n)}_{z}
   = \vq^{T}D^2\vq
   = \sum_{i=1}^{2N}d_i^2q_i^2\nonumber\\
   &<& \sum_{i=1}^{2N} q_i^2 = \mid\vq\mid^2 = \mid Q\ve^{(n)}_{z} \mid^2
   = \mid \ve^{(n)}_{z} \mid^2,
 \end{eqnarray}
where we used $|d_i|<1$ to derive the inequality
and $Q^{T}=Q^{-1}$ to obtain the last equality.
\end{proof}
In the framework o the W4 method,
the eigenvalues of the matrix $W$ in Eq.~\eqref{eq:W4_matrix}
are cruicially important,
which are obtained from the characteristic polynomial:
\begin{eqnarray}
 \det\left[W -d_WI\right] = \det\left[(1-d_W)(1-2\Delta\tau -d_W)I
      +\Delta\tau^2 Y J X \right]=0. \label{eq:eigenW4map0}
\end{eqnarray}

\begin{defn}
 We define the W4SV method as the following choice of matrices
 $X$ and $Y$ in Eqs.~\eqref{eq:W4map_x} and \eqref{eq:W4map_p}:
 $X=V$ and
 $Y=\hat\Sigma^{-1}U^{-1}$,
 where
 $U, V$ and $\hat\Sigma$ are the matrices in the singular
 decomposition of Jacobian $J$ and we define 
 $\hat\Sigma^{-1}$ as
 \begin{eqnarray}
  \hat\Sigma^{-1} =
    \begin{cases}
     \mathrm{diag}\left[\sigma_{+}^{-1},\sigma_{-}^{-1}\right]
     &     (\sigma_{-}\neq 0),\\
     \mathrm{diag}\left[\sigma_{+}^{-1},1\right] & (\sigma_{-} = 0).
    \end{cases}
    \label{eq:hatSigma}
 \end{eqnarray}
\end{defn}

\begin{prop}
 Suppose all singular values of Jacobian~$J$ in the two dimensional
 problems are nonzero,
 then the W4SV map with $0<\Delta\tau< 1$ yields
 a series of vectors~$\bm{x}^{(n)}$ that converge to a solution
 if the initial condition is sufficiently close
 to the solution.\label{prop:w4sv_nonsingular}
\end{prop}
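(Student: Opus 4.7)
My plan is to exploit the very clean algebraic simplification that the W4SV choice forces on the characteristic polynomial \eqref{eq:eigenW4map0}. Since all singular values are nonzero we have $\hat\Sigma = \Sigma$ invertible, and since $U,V$ are orthogonal we have $U^{-1}=U^T$ and $V^{-1}=V^T$. Combining the definition $X=V$, $Y=\Sigma^{-1}U^T$ with the SVD $J=U\Sigma V^T$ from Eq.~\eqref{eq:Jsinglar} gives the collapse
\begin{equation*}
 Y J X \;=\; \Sigma^{-1}U^T \cdot U\Sigma V^T \cdot V \;=\; I,
\end{equation*}
so the entire Jacobian structure in \eqref{eq:eigenW4map0} disappears.

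With $YJX=I$, Eq.~\eqref{eq:eigenW4map0} reduces to
\begin{equation*}
 \det\!\bigl[\,\bigl((1-d_W)(1-2\Delta\tau - d_W) + \Delta\tau^2\bigr)\,I\,\bigr] \;=\; 0,
\end{equation*}
which factors as $(1-d_W-\Delta\tau)^{2N}=0$. Hence the unique eigenvalue of $W$ is $d_W = 1-\Delta\tau$ with algebraic multiplicity $2N=4$, and the choice $0<\Delta\tau<1$ gives $|d_W|<1$, i.e.\ $\rho(W)<1$. Combined with Lemma~\ref{lem:error}, this spectral bound is what drives contraction of the linearized error.

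The main obstacle is that Lemma~\ref{lem:error} assumes a complete set of eigenvectors of $W$, whereas here $d_W$ is a repeated root and $W$ is in fact defective: after the orthogonal change of variables $\tilde{\ve}_x = V^T \ve_x$ (with $V$ evaluated at $\vx^*$) the block $W$ decouples into $N$ copies of
\begin{equation*}
 M \;=\; \begin{pmatrix} 1 & -\Delta\tau \\ \Delta\tau & 1-2\Delta\tau \end{pmatrix},
\end{equation*}
each of which has a single Jordan block at $1-\Delta\tau$. I would handle this gap either by passing to the Jordan form and noting $\|M^n\|\sim n|1-\Delta\tau|^n \to 0$, or equivalently by invoking Gelfand's formula to pick an operator norm $\|\cdot\|_\ast$ with $\|W\|_\ast<1$. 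This is the step where the statement of Lemma~\ref{lem:error} has to be stretched, and it is where I expect the referee to push back.

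Finally, because \eqref{eq:W4map} is nonlinear and $V$, $\Sigma$, $U$ genuinely depend on $\vx^{(n)}$, I would close the argument with a standard local-contraction estimate: on a small ball around $(\vx^\ast,\vzero)$ the Taylor remainder of $\vF$ and the variation of the SVD factors produce an $O(\|\ve_z^{(n)}\|^2)$ perturbation of the linear map $W$; choosing the ball small enough that this perturbation is dominated by $(1-\|W\|_\ast)/2$ keeps the iterate inside the ball and forces geometric decay $\|\ve_z^{(n)}\|\to 0$, which is the asserted convergence.
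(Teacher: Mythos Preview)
Your core computation is exactly the paper's: from $J=U\Sigma V^{T}$ and the W4SV choice $X=V$, $Y=\hat\Sigma^{-1}U^{T}$ one gets $YJX=I$, the characteristic polynomial collapses to $(1-\Delta\tau-d_W)^{2N}=0$, and $|d_W|=|1-\Delta\tau|<1$ for $0<\Delta\tau<1$. The paper stops there and simply asserts that ``the error should decrease monotonically and the iteration will converge''.

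Where you go further than the paper is in flagging the defectiveness of $W$. You are right: the $2\times 2$ block
\[
 M=\begin{pmatrix}1&-\Delta\tau\\ \Delta\tau & 1-2\Delta\tau\end{pmatrix}
\]
has a single Jordan block at $1-\Delta\tau$, so Lemma~\ref{lem:error} (which assumes a complete, even orthonormal, eigenbasis via $Q^{T}=Q^{-1}$) does not apply as stated, and the error need not decrease \emph{monotonically}. The paper does not address this; your proposed remedies (Jordan-form estimate $\|M^{n}\|\sim n|1-\Delta\tau|^{n}\to 0$, or an equivalent norm with $\|W\|_{\ast}<1$ via Gelfand) are the standard and correct way to close the gap. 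Your final local-contraction step, absorbing the $O(\|\ve_z\|^{2})$ remainder from the nonlinearity and the $\vx$-dependence of $U,\Sigma,V$, is likewise more explicit than anything in the paper. In short: same route, but your version is the more rigorous one.
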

\begin{proof}
 Since the Jacobian matrix
 is decomposed as~$J=U\Sigma V^{-1}$,
 we have $\hat\Sigma^{-1}U^{-1}JV = I$
 
 when all the singular values are nonvanishing.
 Then the eigenvalues of the matrix~$W$ in Eq.~\eqref{eq:W4_matrix} can be calculated from Eq.~\eqref{eq:eigenW4map0}:
 \begin{eqnarray}
  \det\left[W -d_WI\right]
   =
   \det\left[(1-d_W)(1-2\Delta\tau -d_W)I
	+\Delta\tau^2 I \right]=
   \det\left[\left( 1 -\Delta\tau -d_W \right)^2 I \right]=0
 \end{eqnarray}
as 
 $d_W=1-\Delta\tau$.
It is obvious that $|d_W|<1$ for $0<\Delta\tau<1$.
If the initial condition is sufficiently close to the solution,
the linearized equation~\eqref{eq:W4_matrix}
is valid and the error should decrease monotonically
and the iteration will converge to the solution.
\end{proof}
\begin{lem}
 Suppose one of the singular values of a $2\times 2$ 
singular Jacobian matrix is vanishing,
 then one of the eigenvalues of the matrix~$W$ for the W4SV map
 is unity.
 \label{lem:w4sv_singular}
\end{lem}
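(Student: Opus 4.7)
The plan is to compute the product $YJX$ for the W4SV choice of preconditioners in the singular case and plug it into the characteristic polynomial~\eqref{eq:eigenW4map0}. Since $J=U\Sigma V^{-1}$ by the singular value decomposition with $U^{-1}=U^{T}$ and $V^{-1}=V^{T}$, the substitution $X=V$, $Y=\hat\Sigma^{-1}U^{-1}$ immediately gives
\begin{eqnarray*}
 YJX \;=\; \hat\Sigma^{-1}U^{-1}\left(U\Sigma V^{-1}\right)V \;=\; \hat\Sigma^{-1}\Sigma .
\end{eqnarray*}
Now I use the singular-case definition of $\hat\Sigma^{-1}$ in Eq.~\eqref{eq:hatSigma}, namely $\hat\Sigma^{-1}=\mathrm{diag}[\sigma_{+}^{-1},1]$ when $\sigma_{-}=0$, together with $\Sigma=\mathrm{diag}[\sigma_{+},0]$, to conclude
\begin{eqnarray*}
 YJX \;=\; \mathrm{diag}[1,\,0].
\end{eqnarray*}

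Next, I substitute this rank-one matrix into the characteristic determinant~\eqref{eq:eigenW4map0}. Letting $g(d_W):=(1-d_W)(1-2\Delta\tau-d_W)$ for brevity, the relevant $2\times 2$ matrix becomes
\begin{eqnarray*}
 g(d_W)\,I + \Delta\tau^{2}\,\mathrm{diag}[1,0] \;=\; \mathrm{diag}\!\left[\,g(d_W)+\Delta\tau^{2},\; g(d_W)\,\right],
\end{eqnarray*}
so its determinant factors as $\bigl(g(d_W)+\Delta\tau^{2}\bigr)\,g(d_W)=0$. The factor $g(d_W)=0$ produces the two roots $d_W=1$ and $d_W=1-2\Delta\tau$, and in particular $d_W=1$ is an eigenvalue of $W$, which is exactly the claim.

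The argument is essentially a direct calculation, so there is no deep obstacle; the only point to keep straight is the bookkeeping that the ``$1$'' inserted into the second diagonal slot of $\hat\Sigma^{-1}$ (introduced precisely to prevent division by zero in the singular direction) exactly cancels against the vanishing $\sigma_{-}$ to reduce $\hat\Sigma^{-1}\Sigma$ to the projector $\mathrm{diag}[1,0]$ rather than to the identity. Once that is observed, the full characteristic determinant splits into a product because $YJX$ commutes with $I$ and is diagonal in the chosen basis, and the eigenvalue $d_W=1$ drops out from the factor associated with the null singular direction.
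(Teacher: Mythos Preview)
Your proof is correct and follows essentially the same line as the paper: compute $YJX=\mathrm{diag}[1,0]$ in the singular case and substitute it into the characteristic polynomial~\eqref{eq:eigenW4map0} to read off $d_W=1$. Your derivation of $YJX$ is slightly slicker---you collapse $U^{-1}U$ and $V^{-1}V$ directly from $J=U\Sigma V^{-1}$ to obtain $\hat\Sigma^{-1}\Sigma$, whereas the paper inserts the rank-one form $J=\sigma_{+}\vu_{+}\vv_{+}^{T}$ and multiplies the column and row vectors out explicitly---but the argument is otherwise identical.
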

\begin{proof}
 Since the singular Jacobian is written as %by
 $J=\sigma_{+}\vu_{+}\vv_{+}^T$,
 the matrix $YJX$ is calculated 
as follows\footnote{In practice, it is helpful to relax 
the condition~$\sigma_{-}=0$ in Eq.~\eqref{eq:hatSigma} 
to $\sigma_{-}<10^{-15}$, for example.}:
 \begin{eqnarray}
  YJX &=& \hat\Sigma^{-1} U^{-1} J V =
   \begin{pmatrix}
    \sigma_{+}^{-1} & 0\\
    0 & 1
   \end{pmatrix}
   \begin{pmatrix}
    \vu_{+}^{T}\\
    \vu_{-}^{T}
   \end{pmatrix}
   \sigma_{+}\vu_{+}\vv_{+}^{T}
   \begin{pmatrix}
    \vv_{+} &
    \vv_{-}
   \end{pmatrix}\nonumber\\
  &=&
   \begin{pmatrix}
    \sigma_{+}^{-1} & 0\\
    0 & 1
   \end{pmatrix}
   \begin{pmatrix}
    \sigma_{+}\vv_{+}^{T}\\
    \vzero^T
   \end{pmatrix}
   \begin{pmatrix}
    \vv_{+} & \vv_{-}
   \end{pmatrix}
   =
   \begin{pmatrix}
    \sigma_{+}^{-1} & 0\\
    0 & 1
   \end{pmatrix}
   \begin{pmatrix}
    \sigma_{+} & 0\\
    0 & 0
   \end{pmatrix}
   =
   \begin{pmatrix}
    1 & 0\\
    0 & 0
   \end{pmatrix}.
 \end{eqnarray}
 Then the characteristic polynomial equation becomes
 \begin{eqnarray}
  0 = \det\left[W -d_W I\right] = \det
   \begin{bmatrix}
    (1-\Delta \tau -d_W)^2 & 0\\
    0 & (1-d_W)(1-\Delta \tau -d_W)
   \end{bmatrix}
   = (1-d_W)(1-\Delta \tau -d_W)^3.
 \end{eqnarray}
It is now apparent that
 one of the eigenvalues is $d_W=1$ and the other is $d_W=1-\Delta \tau$.
\end{proof}
\begin{prop}
 Suppose one of the singular values of the $2\times 2$ Jacobian matrix
 at the $n$-step is vanishing as in Lemma~\ref{lem:w4sv_singular},
 then the W4SV map with $\Delta\tau=1/2$
 produces an increment 
that is not aligned with $\vv_{+}$ or $\vv_{-}$ in general.
Such an alignment occurs only when
 the angle between $\vF(\vx^{(n)})$ and $\vu_{+}^{(n)}$
 accidentally satisfies a particular relation with the angle
 between $\vv_{+}^{(n)}$ and $\vv_{+}^{(n+1)}$.
 \label{prop:w4sv_singular}
\end{prop}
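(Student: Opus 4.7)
The plan is to substitute $\Delta\tau=1/2$ into the W4SV map, use the singular form of the Jacobian from Lemma~2 to evaluate the refreshed auxiliary variable $\vp^{(n+1)}$, propagate through one more application of the map, and then compare the resulting displacement against the old singular basis $\{\vv_{+}^{(n)},\vv_{-}^{(n)}\}$ to read off the alignment condition.

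First I would exploit the fact that with $\Delta\tau=1/2$ the coefficient $1-2\Delta\tau$ vanishes in~\eqref{eq:W4map_p}, so the auxiliary variable is fully overwritten at each step. Using $J^{(n)}=\sigma_{+}\vu_{+}^{(n)}\vv_{+}^{(n)T}$ from Lemma~2 and the regularized inverse~\eqref{eq:hatSigma} with $\sigma_{-}=0$, the update becomes
\begin{equation*}
\vp^{(n+1)} = -\tfrac{1}{2}\hat\Sigma^{(n)-1}U^{(n)-1}\vF(\vx^{(n)})
= -\tfrac{1}{2}\begin{pmatrix} \sigma_{+}^{-1}\,\vu_{+}^{(n)T}\vF(\vx^{(n)}) \\ \vu_{-}^{(n)T}\vF(\vx^{(n)}) \end{pmatrix},
\end{equation*}
where the second component is a plain projection rather than $\sigma_{-}^{-1}$-amplified. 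Applying~\eqref{eq:W4map_x} once more produces the escape increment
\begin{equation*}
\vx^{(n+2)} - \vx^{(n+1)} = \tfrac{1}{2}V^{(n+1)}\vp^{(n+1)}
= -\tfrac{1}{4}\Bigl[\sigma_{+}^{-1}(\vu_{+}^{(n)T}\vF)\,\vv_{+}^{(n+1)} + (\vu_{-}^{(n)T}\vF)\,\vv_{-}^{(n+1)}\Bigr].
\end{equation*}
The crucial observation is that the displacement lives in the new singular basis $\{\vv_{\pm}^{(n+1)}\}$, while the proposition demands alignment against the old basis $\{\vv_{\pm}^{(n)}\}$.

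The third step is to turn "alignment" into a scalar equation. Using the orthonormality of the SVD bases I would parametrize $\vF(\vx^{(n)}) = |\vF|\,(\cos\theta_1\,\vu_{+}^{(n)} + \sin\theta_1\,\vu_{-}^{(n)})$ and $\vv_{+}^{(n)} = \cos\theta_2\,\vv_{+}^{(n+1)} + \sin\theta_2\,\vv_{-}^{(n+1)}$. Requiring the displayed increment to be parallel to $\vv_{+}^{(n)}$ forces the ratio of the $\vv_{+}^{(n+1)}$- and $\vv_{-}^{(n+1)}$-coefficients to equal $\cos\theta_2/\sin\theta_2$, which reduces to the single constraint $\tan\theta_2 = \sigma_{+}\tan\theta_1$; an analogous identity governs alignment with $\vv_{-}^{(n)}$. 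These are exactly the "particular relations" between the $\vF$--$\vu_{+}^{(n)}$ angle and the $\vv_{+}^{(n)}$--$\vv_{+}^{(n+1)}$ angle named in the statement, and each cuts out a codimension-one locus in configuration space.

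The main obstacle is the genericity argument. The angle $\theta_1$ is set by the residual $\vF(\vx^{(n)})$, whereas $\theta_2$ is determined by how the singular basis of $J$ rotates as $\vx$ moves from $\vx^{(n)}$ to $\vx^{(n+1)}=\vx^{(n)}+\tfrac{1}{2}V^{(n)}\vp^{(n)}$, and is therefore controlled by the first derivatives of $J$ along that displacement. To make "in general" precise one has to argue that these two angles are functionally independent of the underlying data, so that no algebraic identity forces $\tan\theta_2 = \sigma_{+}\tan\theta_1$; under mild smoothness of $\vF$ and nondegenerate variation of $J$ this is a short open-mapping observation, but it is the only step that escapes pure algebra. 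A minor subtlety to address along the way is the degenerate case $\vp^{(n)}=\vzero$, in which $\vx^{(n+1)}=\vx^{(n)}$ so $\theta_2=0$ and the criterion collapses to $\vF\parallel\vu_{+}^{(n)}$, still a non-generic condition.
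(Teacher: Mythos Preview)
Your approach is essentially the same as the paper's: you substitute $\Delta\tau=1/2$ to kill the memory term, compute $\vx^{(n+2)}-\vx^{(n+1)}=-\tfrac{1}{4}V^{(n+1)}\hat\Sigma^{(n)-1}U^{(n)T}\vF(\vx^{(n)})$, introduce the angle $\theta$ between $\vF$ and $\vu_{+}^{(n)}$ and the rotation angle $\phi$ between $\vv_{+}^{(n)}$ and $\vv_{+}^{(n+1)}$, and arrive at the alignment condition $\tan\phi=\sigma_{+}\tan\theta$. The only cosmetic differences are that the paper expands $\vv_{\pm}^{(n+1)}$ in the old basis rather than the reverse, and stops after reading off the $\vv_{-}^{(n)}$-component without your extra remarks on genericity and the degenerate case $\vp^{(n)}=\vzero$.
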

\begin{proof}
 The W4SV map with $\Delta\tau=1/2$ is given as
\begin{subequations}
 \label{eq:W4SVmap1/2}
 \begin{align}
  \bm{x}^{(n+1)} &= \bm{x}^{(n)} +\frac{1}{2} V \bm{p}^{(n)},\\
  \bm{p}^{(n+1)} &= -\frac{1}{2} \hat\Sigma^{-1}U^{-1}\bm{F}(\bm{x}^{(n)}).
 \end{align}
\end{subequations}
 The increment at the $(n+1)$-step can be written as
 \begin{eqnarray}
  \bm{x}^{(n+2)} -\bm{x}^{(n+1)} &=&
   -\frac{1}{4}V_{(n+1)}\hat\Sigma_{(n)}^{-1}U_{(n)}^{T}\vF(\vx^{(n)})
   = -\frac{F\cos\theta}{4\sigma_{+}^{(n)}}\vv_{+}^{(n+1)}
   -\frac{F\sin\theta}{4}\vv_{-}^{(n+1)},
 \end{eqnarray}
 where $\cos\theta$ is the angle between $\vu_{+}^{(n)}$ and
 $\vF(\vx^{(n)})$
 and $F=|\vF|$ is the absolute value of vector~$\vF$.
 With the employment of the angle~$\phi$ between $\vv_{+}^{(n)}$ and $\vv_{+}^{(n+1)}$,
 the right-singular vectors~$\vv_{+}^{(n)}$ and $\vv_{-}^{(n)}$
 at the $(n+1)$-step are expanded by those at the $n$-step as follows:
 \begin{eqnarray}
  \vv_{+}^{(n+1)} = \vv_{+}^{(n)}\cos\phi  -\vv_{-}^{(n)}\sin\phi,\nonumber\\
  \vv_{-}^{(n+1)} = \vv_{+}^{(n)}\sin\phi  +\vv_{-}^{(n)}\cos\phi.
 \end{eqnarray}
 The increment is finally expressed by the latter vectors as
 \begin{eqnarray}
  \bm{x}^{(n+2)} -\bm{x}^{(n+1)} &=&
  -\frac{F}{4\sigma_{+}^{(n)}}
   \left\{\cos\theta\cos\phi +\sigma_{+}^{(n)}\sin\theta\sin\phi
   \right\} \vv_{+}^{(n)}
   +\frac{F}{4\sigma_{+}^{(n)}}
   \left\{\cos\theta\sin\phi -\sigma_{+}^{(n)}\sin\theta\cos\phi
   \right\} \vv_{-}^{(n)}.\nonumber\\
 \end{eqnarray}
 This indicates that 
the increment in the direction of $\vv_{-}^{(n)}$ 
exists unless
$\cos\phi=\cos\theta=0$ or $\tan\phi=\sigma_{+}\tan\theta$.
To put another way, there is no increment in the direction of $\vv_{-}^{(n)}$, which corresponds to $\sigma_{-}=0$,
 only when the following condition~(i) or (ii) is satisfied:
 \begin{eqnarray}
  \begin{cases}
   \mathrm{(i)} & \vF^{(n)}\perp\vu_{+}^{(n)}\ \mathrm{and}\ \vv_{+}^{(n+1)}\perp\vv_{+}^{(n)},\\
   \mathrm{(ii)} & \phi=\tan^{-1}\left(\sigma_{+}\tan\theta\right).
  \end{cases}
  \label{eq:condition}
 \end{eqnarray}
\end{proof}
\begin{thm}
 Suppose a two dimensional problem, in which the
 associated Jacobian matrix can be defined and 
 has at least one nonzero singular value.
 Let $\vx^{(n)}$ be a series of intermediate
 solution vectors during the iteration
 defined by the W4SV map.
 Then, the W4SV map with $\Delta\tau=1/2$ can reach a solution
 from initial conditions sufficiently close to the true solution
 unless the particular condition~\eqref{eq:condition} is satisfied among
 the vectors~$\vF(\vx^{(n)}), \vu_{+}^{(n)}, \vv_{+}^{(n)}$
 and $\vv_{+}^{(n+1)}$.
\end{thm}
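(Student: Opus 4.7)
The plan is to glue together the two regimes already analysed in Proposition~\ref{prop:w4sv_nonsingular} and Proposition~\ref{prop:w4sv_singular}: at any step where the Jacobian is nonsingular the linearised iteration contracts the error, and at any step where the Jacobian is singular (but has at least one nonzero singular value, as the hypothesis guarantees) the map generically produces an increment off the singular locus, after which the nonsingular regime takes over. The theorem should follow by chaining these two regimes together inside a basin small enough for the linearisation to remain valid.

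First I would classify each iterate $\vx^{(n)}$ by whether $\det J(\vx^{(n)})\neq 0$ or $\det J(\vx^{(n)}) = 0$ with $\sigma_{+}^{(n)}\neq 0$. In the former case Proposition~\ref{prop:w4sv_nonsingular} applies verbatim: with $\Delta\tau=1/2$ every eigenvalue of the matrix $W$ in Eq.~\eqref{eq:W4_matrix} equals $1-\Delta\tau=1/2$, so Lemma~\ref{lem:error} forces the combined error $\ve_z^{(n)}=(\ve_x^{(n)},\ve_p^{(n)})^{T}$ to contract in norm. In the latter case Proposition~\ref{prop:w4sv_singular} shows that, unless one of the two exceptional relations in Eq.~\eqref{eq:condition} holds, the increment $\vx^{(n+2)}-\vx^{(n+1)}$ has a nonzero component along the null direction $\vv_{-}^{(n)}$ of $J(\vx^{(n)})$. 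This component displaces the iterate off the singular locus, which is a codimension-one subvariety of $\mathbb{R}^{2}$, so generically the subsequent iterate has a nonsingular Jacobian and the argument reverts to the contracting regime.

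The main obstacle is controlling what happens at the singular steps themselves. By Lemma~\ref{lem:w4sv_singular} the matrix $W$ has an eigenvalue $d_W=1$, so the linearised error is not contracted along the corresponding direction and could in principle grow if the iterate were far from $\vx^{*}$. The remedy is to choose the initial condition close enough to the solution that, first, the bounded non-contracting displacement at any singular step still leaves the iterate inside the linearisation basin of Proposition~\ref{prop:w4sv_nonsingular}, and second, the generic escape from the singular locus asserted by Proposition~\ref{prop:w4sv_singular} is not destroyed by the nonlinear remainder. Because the exclusion set~\eqref{eq:condition} is a one-parameter family of relations among the angles $\theta$ and $\phi$, it is a measure-zero condition on the iterates and therefore avoidable.

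Putting the pieces together, one starts from an $\vx^{(0)}$ in a sufficiently small neighbourhood of $\vx^{*}$; each step is either a nonsingular step, which strictly contracts $\|\ve_z^{(n)}\|$, or a singular step, which generically deposits the next iterate in the open set where $J$ is invertible. Provided the excluded configuration~\eqref{eq:condition} is never realised, only finitely many singular steps can occur before the iteration enters the purely contracting regime, and the W4SV map then converges linearly to the solution.
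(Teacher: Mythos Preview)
Your proposal follows essentially the same two-case split as the paper's proof: invoke Proposition~\ref{prop:w4sv_nonsingular} when $J$ is nonsingular, and invoke Proposition~\ref{prop:w4sv_singular} at a singular step to produce a nonzero increment along $\vv_{-}^{(n)}$, then revert to the contracting regime. The paper's own argument is in fact terser than yours---it simply states that the nonzero $\vv_{-}^{(n)}$-component at the $(n+2)$-step suffices for convergence, without your additional remarks on the codimension of the singular locus, the boundedness of the non-contracting displacement, or the finiteness of singular steps---so your write-up is, if anything, a more careful rendering of the same strategy.
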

\begin{proof}
The claim is obtained if the
 $2\times 2$
 Jacobian matrix is non-singular from
 Proposition~\ref{prop:w4sv_nonsingular}.

 If the W4SV iteration encounters a singularity of the Jacobian
 matrix,
 the error does not decrease
 in the direction of the right-singular vector~$\vv_{-}^{n}$
 from the $n$-step to the $(n+1)$-step.
 However, by Proposition~\ref{prop:w4sv_singular},
 there appears a nonzero increment in the direction of $\vv_{-}^{(n)}$
 at the $(n+2)$-step
 unless the condition~\eqref{eq:condition}
 is satisfied.
 The series of vectors~$\vx^{(n)}$ by the W4SV iteration map 
hence converges to
 the solution even in this case.
\end{proof}

%%%%%%%%%%%%%%%%%%%%%%%%%%%%%%%%%%%%%%%%%%%%%%%%%%
\section{Numerical tests}\label{sec:results}
%%%%%%%%%%%%%%%%%%%%%%%%%%%%%%%%%%%%%%%%%%%%%%%%%%
%
\begin{table}[t]
\begin{tabular}{l|c|c|c|c|c|c|c}
 Problem No. & $\vx_0$ & NR & dNR & qN & mqN$^{(*1)}$
 & W4UL & W4LH\\\hline
 \begin{tabular}{c}
  1\
  (Rosenbrock)
 \end{tabular} &$(-1.2,1)^T$ & * & * & * & 4 & * & 45\\
 \begin{tabular}{c}
  2\
  (Freudenstein \& Roth)
 \end{tabular} & $(6,3)^T$ & 105 & 32 & 35 & 12 & 50 & $\bigtriangleup$\\
 \begin{tabular}{c}
  3\
  (Powell)
 \end{tabular} & $(0,1)^T$ & 12 & 36 & 73 & 28 & 60 & 57\\
 & $(1,1)^T$ & * & * & * & & * & * \\
 \begin{tabular}{c}
  4\
  (Brown)
 \end{tabular} & $(1,1)^T $ & 538 & 465 & * & * & 711 & $\bigtriangleup$ \\
 \begin{tabular}{c}
  5\
  (Beale)
 \end{tabular} & $(1,1)^T$ & * & * & * & * & * & 642 \\
   & $(0,2)^T$ & * & * & * &  & * & * \\
 \begin{tabular}{c}
  A\
  (Hueso \& Monteiro)
 \end{tabular} & $(1.5,2.5)^T$ & 13 & 30 & * & & 55 & 47 \\
 \begin{tabular}{c}
  B\
  (Fujisawa)
 \end{tabular} & $(0,1)^T$ & * & * & * &  & * & * \\
   & $(0,-1)^T$ & * & * & * &  & * & * 
\end{tabular}
 \caption{Numerical results obtained with representative root-finding
 methods for the two dimensional problems
 given in~\ref{sec:problem}.
The numbers in the third to sixth columns
 show how many iterations are
 needed for the different methods to obtain the solution.
 We put ``$*$'' when the method fails to find a solution
 or ``$\bigtriangleup$'' when it takes
 more than $10^6$ iterations to get the solution.
 (NR: Newton-Raphson method, dNR: damped Newton-Raphson method
 with $\Delta\tau=0.5$, qN: quasi-Newton(Good Broyden) method
 with $\Delta\tau=0.5$,
 mqN$^{(*1)}$: modified quasi-Newton method by Fang \textit{et al.}\cite{fang2018modified},
 W4UL: W4 method with the UL decomposition by Okawa \textit{et al.}\cite{Okawa:2018smx},
 W4LH: W4 method with the LH decomposition by Fujisawa \textit{et al.}\cite{Fujisawa:2018dnh}). }
 \label{table:tests_old}
\end{table}
A large number of optimization problems were
collected so far
to test the reliability and robustness of a new scheme, e.g.,~\cite{more1981}.
We first show in Table~\ref{table:tests_old} 
the performance of some representative root-finding methods
for the standard problems given
in the literature(For example, see~\cite{fang2018modified}).
 We also included two other singular problems to the list: problem A by
 Hueso\&Monteiro\cite{hueso2009} and problem B by Okawa \textit{et al.}\cite{Okawa:2018smx}.
 Each row in the table corresponds to one of these problems.
 The leftmost column gives the problem numbers,
 which are referred to also
 in~\ref{sec:problem}
 and the second column denotes the initial conditions employed,
 results are given from the third to sixth columns
 for the different methods: (from left)
 Newton-Raphson(NR) method, damped Newton-Raphson(dNR) method with $\Delta\tau=0.5$,
 Good Broyden method,which belongs to the class of quasi-Newton(qN) methods,
 modified quasi-Newton(mqN) method by Fang \textit{et al.}\cite{fang2018modified},
 W4 method with the UL decomposition and $\Delta\tau=0.5$ by Okawa \textit{et al.}\cite{Okawa:2018smx},
 and W4 method with the LH decomposition and $\Delta\tau=0.5$ by Fujisawa \textit{et
 al.}\cite{Fujisawa:2018dnh}, respectively.
 We display in each cell of these columns the number of iteration steps it takes the corresponding method
 to reach the solution within the error of $10^{-8}$,
 which is defined as $\mathrm{max}(F_x/||F_x||,F_y/||F_y||)$ with
 $||F_i||$ being the sum of absolute values of all terms in $F_i$.
 We put instead ``*'' when the method fails to find a solution
 entirely or ``$\bigtriangleup$'' when 
the solution is obtained with more than $10^6$ iterations.

It should be clear that these problems are 
actually very difficult to solve.
In fact, none of the methods in the list
is able to obtain the solution for all the problems.
Note that some of the problems have
a singular Jacobian for the initial condition
and the iteration map cannot be defined
in the first place.
The details will be given below for this type of problems.

\begin{table}[t]
\begin{tabular}{l|c|c|c|c|c|c}
 Problem No. & $\vx_0$ 
 & \begin{tabular}{c}W4SV\\ ($\Delta\tau=1$) \end{tabular}
 & \begin{tabular}{c}W4SV\\ ($\Delta\tau=0.9$) \end{tabular}
 & \begin{tabular}{c}W4SV\\ ($\Delta\tau=0.8$) \end{tabular}
 & \begin{tabular}{c}W4SV\\ ($\Delta\tau=0.7$) \end{tabular}
 & \begin{tabular}{c}W4SV\\ ($\Delta\tau=0.5$) \end{tabular}\\\hline
 \begin{tabular}{l}
  1\
  % (Rosenbrock)
 \end{tabular}
 &$(-1.2,1)^T$ & 4 & 19 & 31 & 30 & 40 \\
 \begin{tabular}{l}
  2\
  % (Freudenstein \& Roth)
 \end{tabular}
 & $(6,3)^T$ & 210 & 95 & 72 & 58 & 50 \\
 \begin{tabular}{l}
  3\
  % (Powell)
 \end{tabular}
  & $(0,1)^T$ & 24 & 29 & 34 & 40 & 58 \\
  & $(1,1)^T$ & 42 & 155 & 61 & 75 & 154 \\
 \begin{tabular}{l}
  4\
  % (Brown)
 \end{tabular}
  & $(1,1)^T $ & 188 & 33136 & 3279 & 3621 & 8266 \\
 \begin{tabular}{l}
  5\
  % (Beale)
 \end{tabular}
  & $(1,1)^T$ & 12 & 15 & 18 & 22 & 37\\
   & $(0,2)^T$ & 16 & 30 & 381 & 34 & 58\\
 \begin{tabular}{l}
  A\
  % (Hueso \& Monteiro)
 \end{tabular}
  & $(1.5,2.5)^T$ & 26 & 29 & 33 & 38 & 55\\
 \begin{tabular}{l}
  B\
  % (Fujisawa)
 \end{tabular}
 & $(0,1)^T$ & 10 & 14 & 18 & 14 & 43\\
 & $(0,-1)^T$ & * & 56 & 28 & 38 & 307
\end{tabular}
 \caption{
 Same as Table~\ref{table:tests_old} but for our new scheme
with three different values of $\Delta\tau$.
 }
 \label{table:tests_new}
\end{table}
Before proceeding to the details,
we exhibit the corresponding results obtained with our new W4SV
method in Table~\ref{table:tests_new}.
Note that there is one free parameter, the time interval~$\Delta\tau$ in the W4 schemes.
In the table we show the results for three cases with
$\Delta\tau=1, 0.9, 0.8, 0.7$ and $0.5$\footnote{Although our analysis in this paper is valid only in the range of $0<\Delta\tau<1$, we have just included $\Delta\tau=1$ to directly compare the W4SV method with the NR method.  It indicates that it is safer to employ $\Delta\tau<1$.}.
It is remarkable that the W4SV method can solve
essentially all the problems including those with the singular
Jacobians in the initial conditions.
This clearly indicates that the W4SV method is good at finding roots
for all sorts of two dimensional nonlinear problems\footnote{As demonstrations, the W4SV method for these problems are also written with Python language and are now public in \cite{OkawaPyW4}.}.
Now we move on to the details of each singular problem.% in the following.

\subsection{Powell's Badly Scaled function}
\begin{figure}[t]
 \begin{tabular}{cc}
  \includegraphics[width=8cm,clip]{./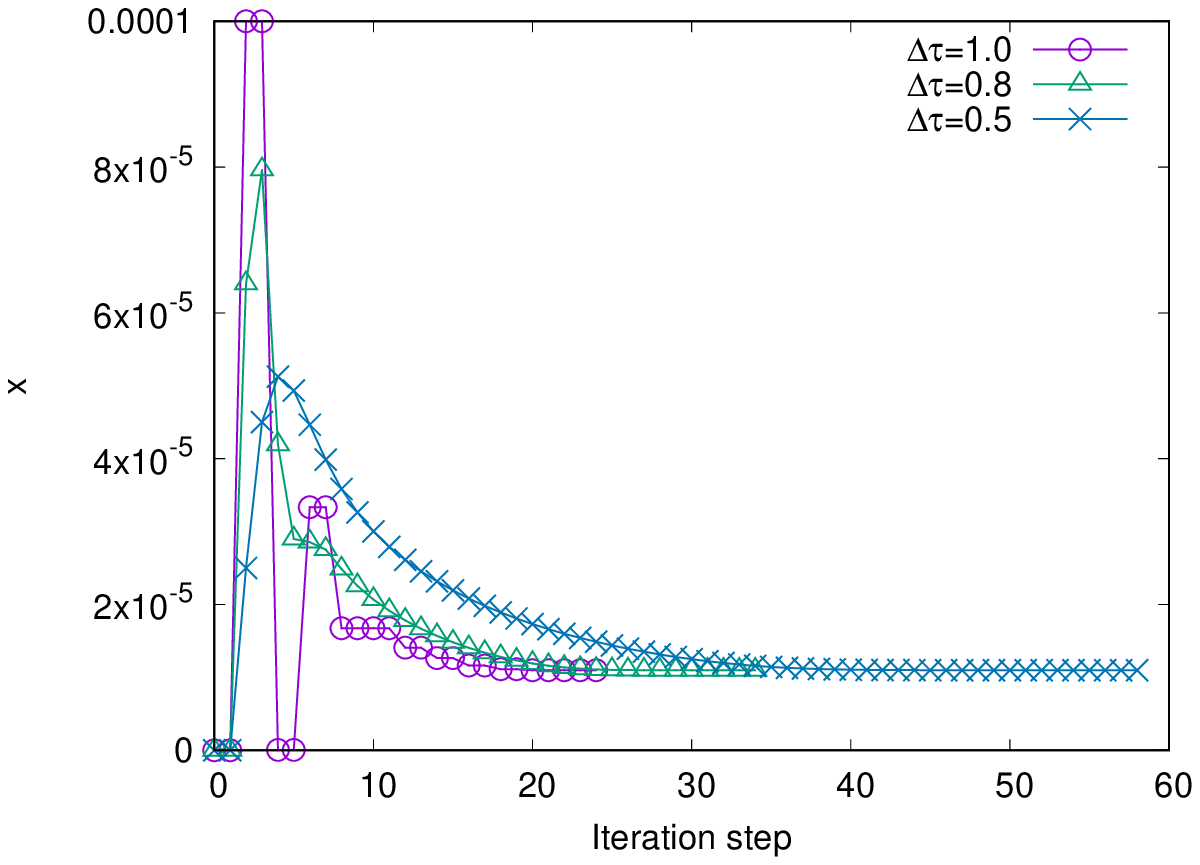} &
  \includegraphics[width=8cm,clip]{./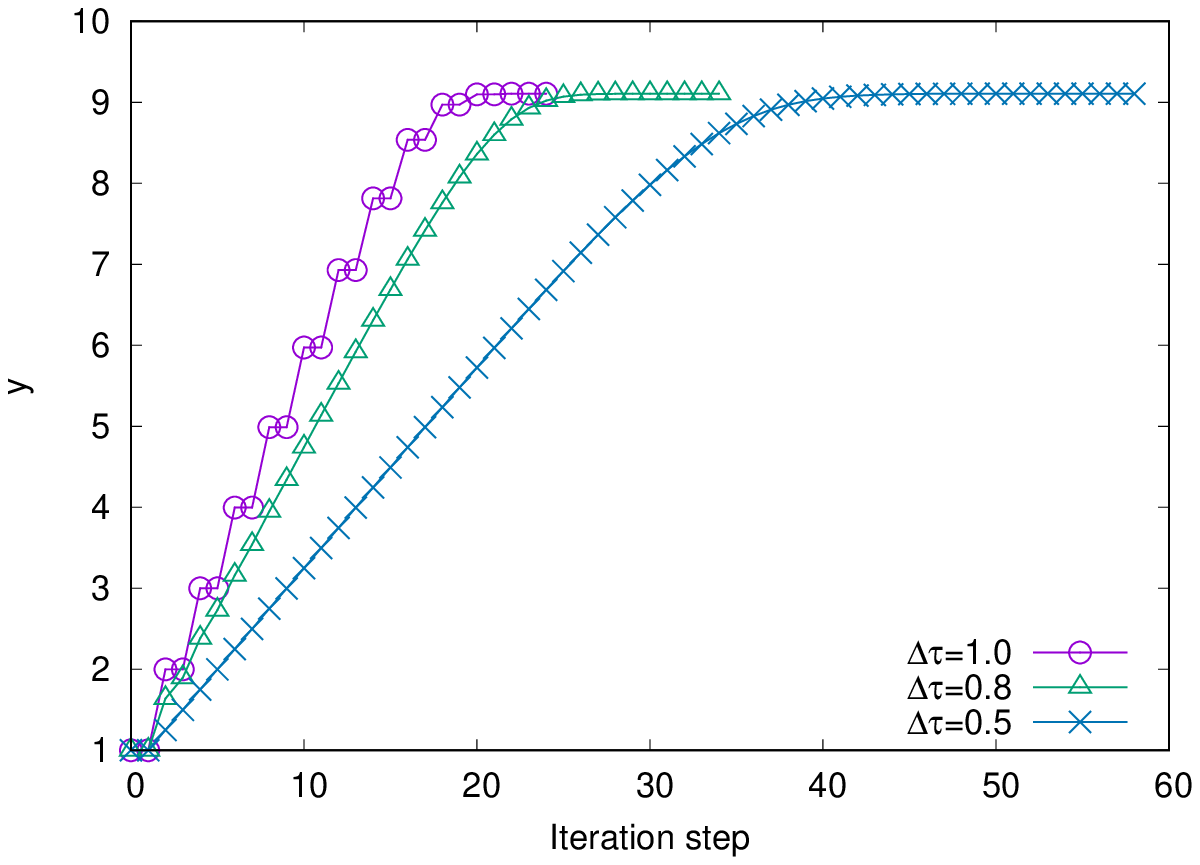}\\
  (a) & (b)\\
  \includegraphics[width=8cm,clip]{./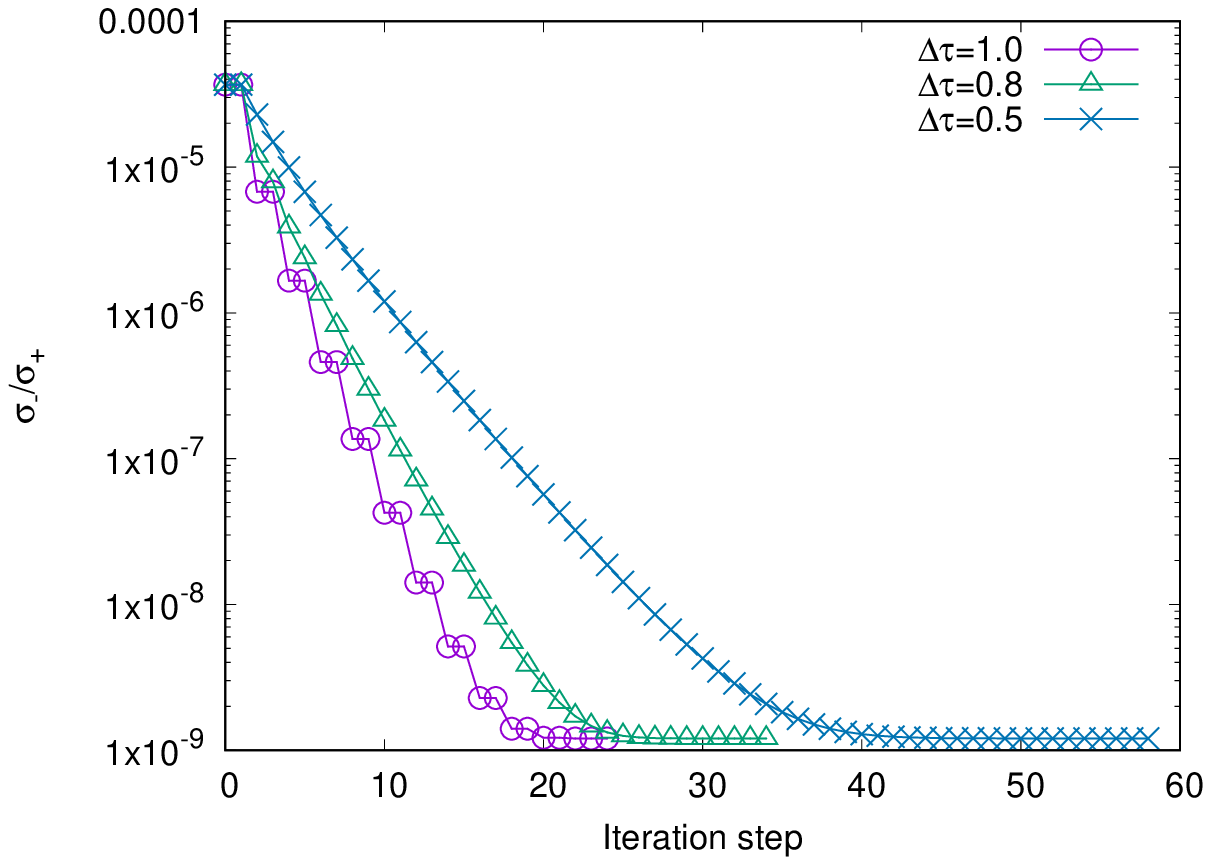} &
  \includegraphics[width=8cm,clip]{./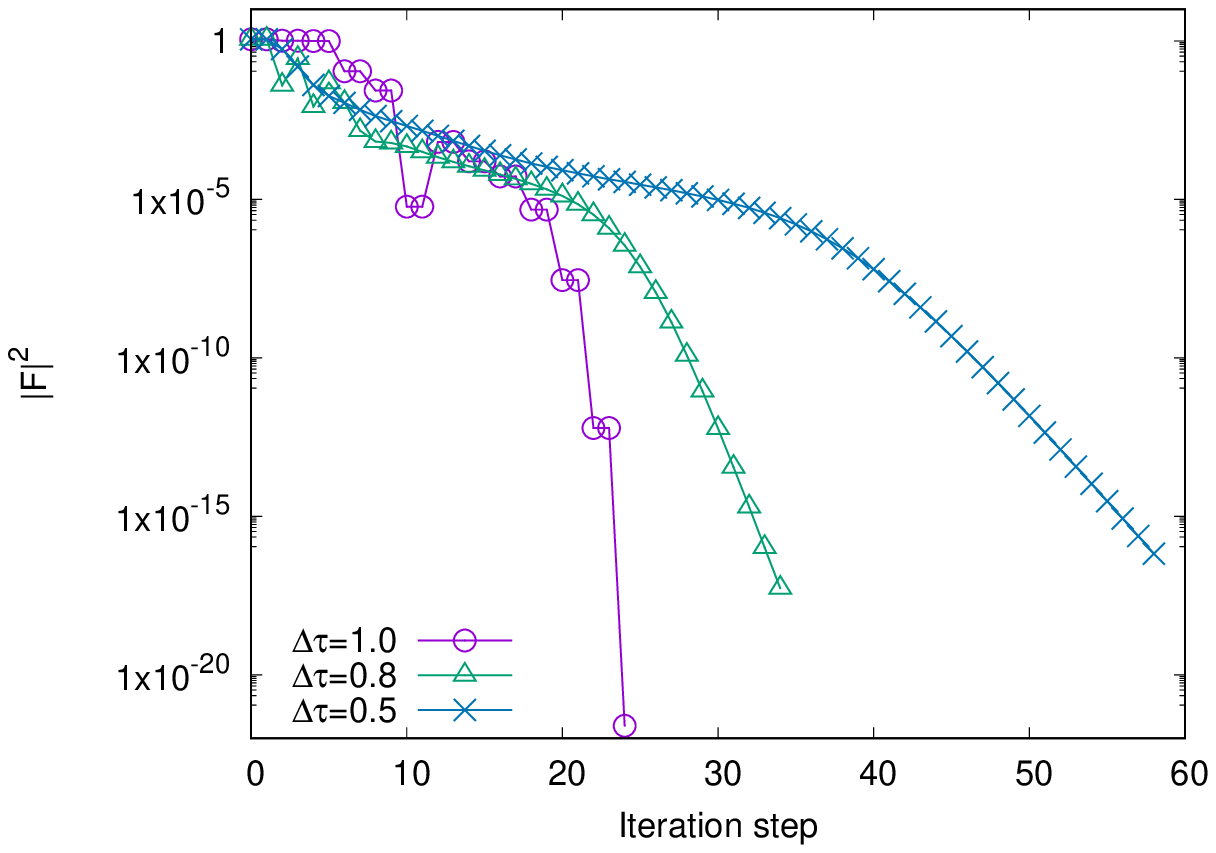}\\
  (c) & (d)
 \end{tabular}
 \caption{The numerical results obtained with the
 W4SV method
 for Powell's badly scaled problem: %with initial condition~$(0,1)^T$
 (a) $x$, (b) $y$,
 (c) the ratio of the smaller to larger singular
 values~$\sigma_{-}/\sigma_{+}$,
 (d) the error norm~$|F|^2 = f_x^2 +f_y^2$.
 The different colors specify the values of $\Delta\tau$.}
 \label{fig:powell1}
\end{figure}
The singularity in the initial condition cannot be
treated with the existing methods that need
to invert the Jacobian matrix.
We show in the following subsections how such
singularities are handled with the W4SV method.
The first example is
the well-known
 Powell's Badly Scaled function~\eqref{eq:powell}.
If we choose the initial condition as~$\vx=(0,1)^T$,
then even the NR method obtains the solution rather easily.
We are faced with a
difficulty, however, when we start the NR iteration 
from $\vx=(1,1)^T$ as in Table~\ref{table:tests_old}.
In fact, the Jacobian matrix associated with this problem is
given at $\bx=(x,y)^T$ as
\begin{eqnarray}
 J=
 \begin{pmatrix}
  10^4y  & 10^4x \\
  -e^{-x} & -e^{-y}
 \end{pmatrix}.
\end{eqnarray}
Obviously, it is singular when $x=y$.
In the existing root-finding methods using the inversion of Jacobian,
one has to change the initial condition.
As we show shortly, our W4SV method can solve the problem even from
this initial condition.

Fig.~\ref{fig:powell1} shows the evolutions in the W4SV method of (a) $x$, (b) $y$,
(c) $\sigma_{-}/\sigma_{+}$ the ratio of singular values
 and (d) $|F|^2 = f_x^2 +f_y^2$ the norm of error
 for Powell's badly scaled function.
The initial condition is $\vx=(0,1)^T$,
which is non-singular.  The different colors in the figure
correspond to the choice from $\Delta\tau=1, 0.8, 0.5$.
It is found from panels
 (a) and (b) of Fig.~\ref{fig:powell1}
 that $x$ and $y$ are settled down to the solution for all the values of $\Delta\tau$.
 As shown in panel (c),
the smaller singular value~$\sigma_{-}$ 
approaches a small value as the iteration is closing to an end,
while the iteration map is well-defined there.
 As expected from the analysis of the W4SV map
near the solution in Lemma~\ref{lem:error},
 the error decreases monotonically in Fig.~\ref{fig:powell1}
 after the numerical solution $\bx$ comes close to
the solution.

\begin{figure}[t]
 \begin{tabular}{cc}
  \includegraphics[width=8.cm,clip]{./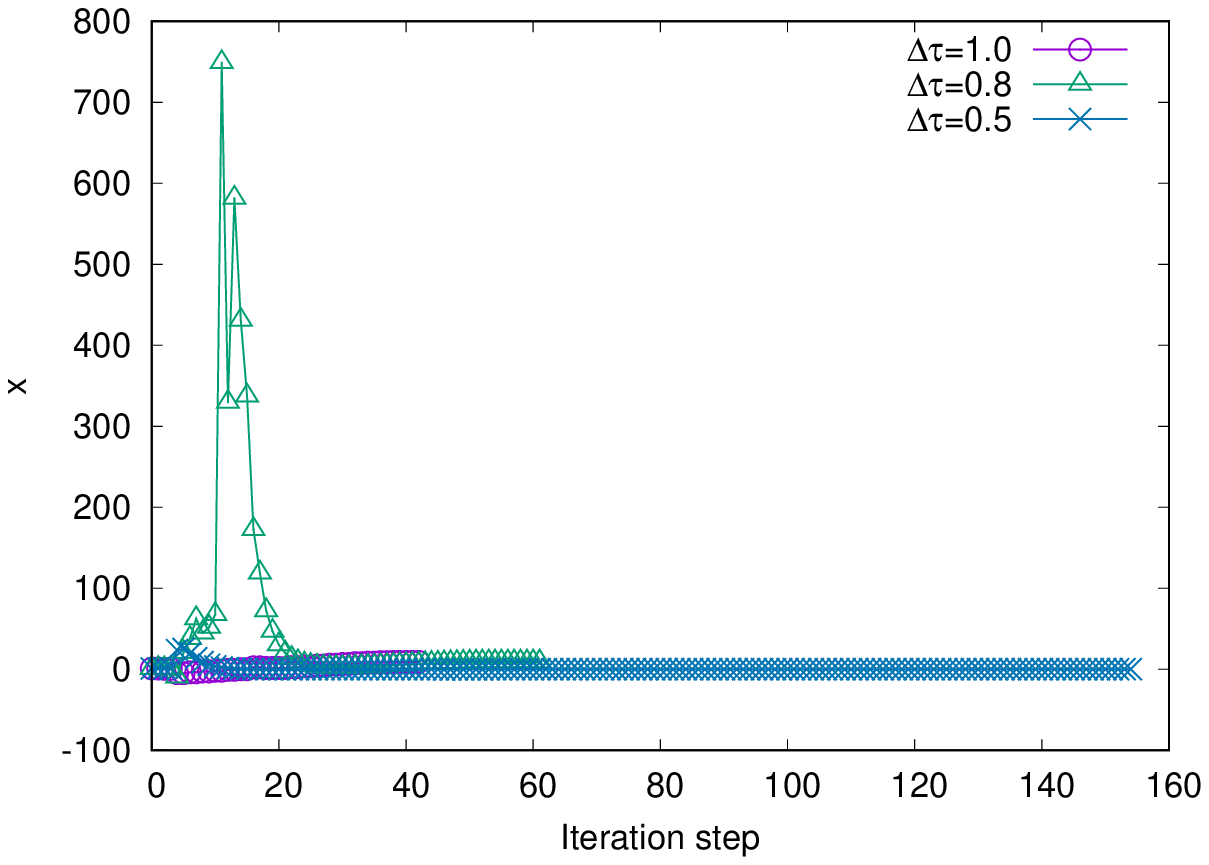} &
  \includegraphics[width=8.cm,clip]{./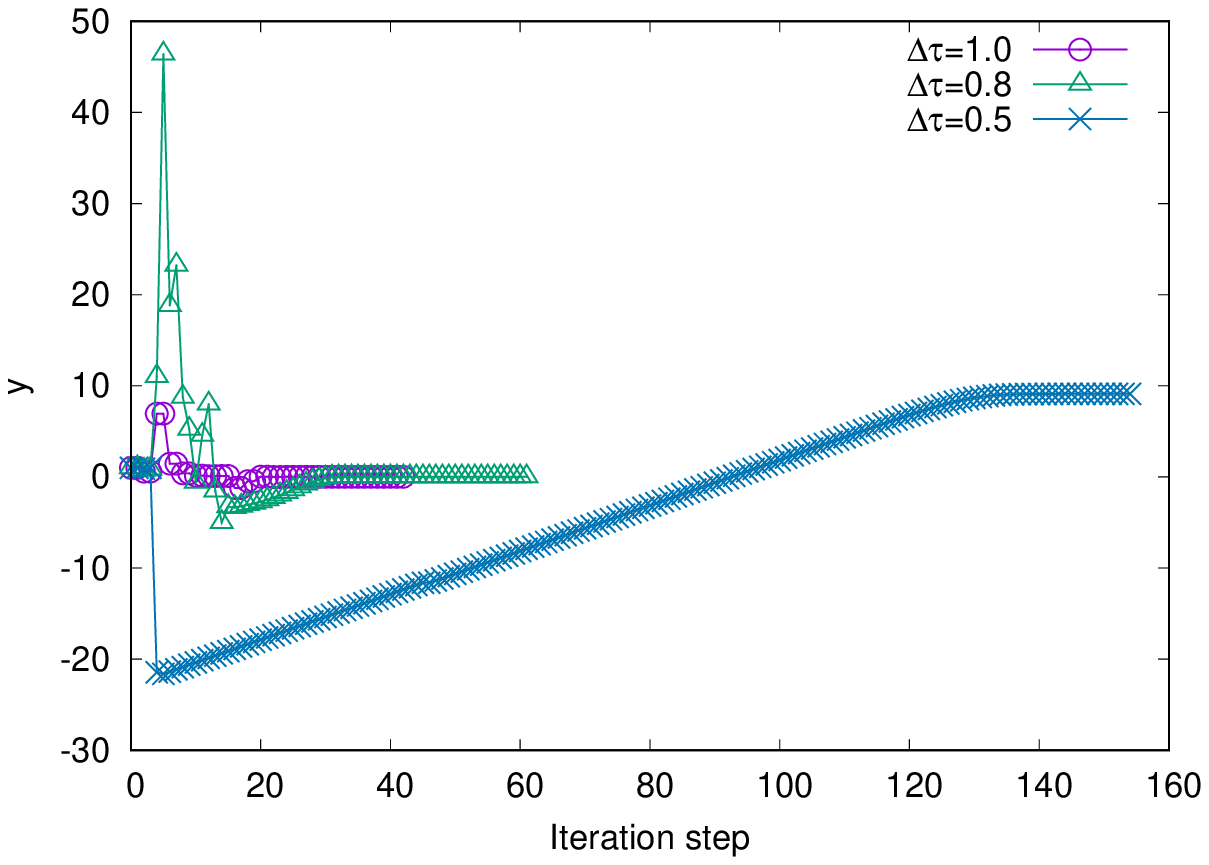}\\
  (a) & (b)\\
  \includegraphics[width=8.cm,clip]{./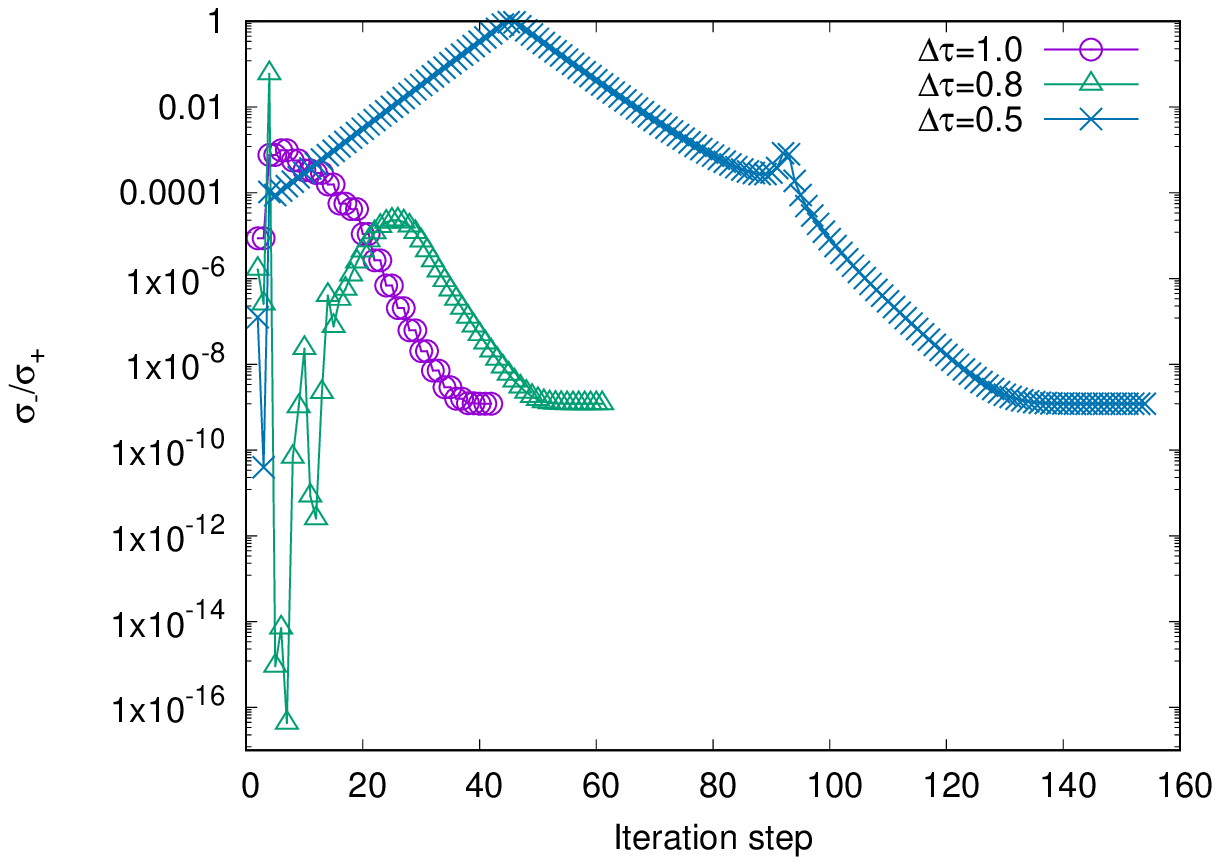} &
  \includegraphics[width=8.cm,clip]{./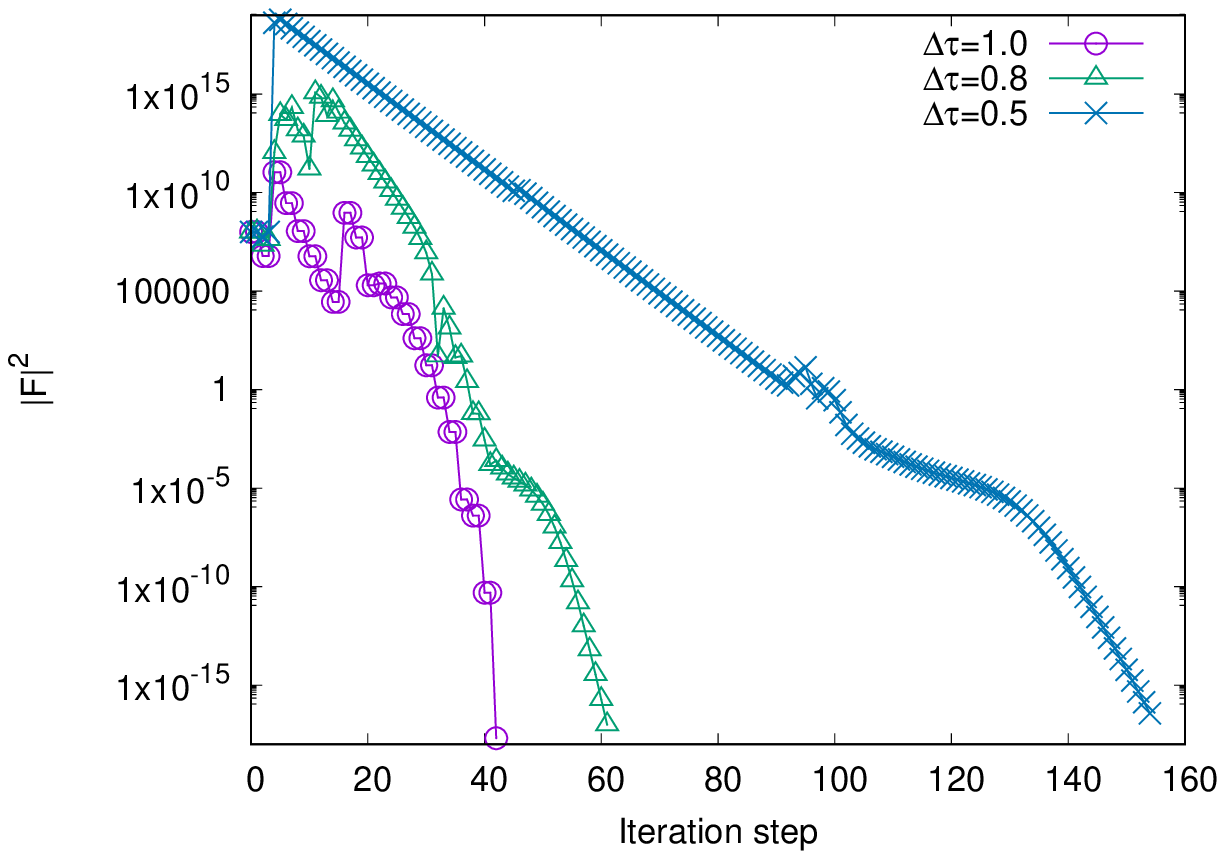}\\
  (c) & (d)
 \end{tabular}
 \caption{Same as Fig.~\ref{fig:powell1}
 except for the initial condition~$(1,1)^T$.}
 \label{fig:powell2}
\end{figure}
In Fig.~\ref{fig:powell2}, we display the same quantities
as in Fig.~\ref{fig:powell1} except for the initial
guess, which is now set to $(1,1)^T$.
This initial condition is singular
and the application of the existing methods with the Jacobian
inversion is simply impossible.
Remarkably, the W4SV method successfully found the solution
also in this case for all three values of $\Delta\tau$.
As expected,
the error decreases monotonically towards the end of iterations,
as seen in (d) of Fig.~\ref{fig:powell2}.
More importantly, the initial condition is singular
~(see panel (c)) and is much farther away from the solution
~(panel (d)) compared with the above case.
Note that again for all three values of $\Delta\tau$
the W4SV method
defines non-singular iteration map
 and successfully escapes from the initial singularity.
It is observed, however, that the iteration maps
kick the intermediate solutions in early iteration steps
away from the solution.
This happens because 
the intermediate solutions
stay
 near the initial singilarity after a few iterations 
 and the big factor~$1/\sigma_{-}$ combined
 with large values of $|F|$
 in the iteration map finally pushes
 the following solution away from the true solution.
Nevertheless, once escaped 
from the vicinity of the initial singularity,
the subsequent solutions start to move in 
the right direction and eventually converge to the solution.
Note also that this system~\eqref{eq:powell} is symmetric with respect to $x$ and $y$
and then there is another solution by exchanging $y$ for $x$ and $x$ for $y$ as results with $\Delta\tau=0.8$ and $\Delta\tau=1.0$ in Fig.~\ref{fig:powell2}.
\subsection{Beale's function}
Beale's function is given in Eq.~\eqref{eq:beale} and
the associated Jacobian matrix is 
\begin{eqnarray}
 J=
 \begin{pmatrix}
  y-1 & x \\
  y^2-1 & 2xy
 \end{pmatrix}.
\end{eqnarray}
Since $\det J = x(y-1)^2$,
the Jacobian matrix is singular
at $x=0$ or $y=1$,
corresponding to the column, respectively.
If such a singular state is encountered during the iteration,
the existing methods that utilizes the inverse of $J$
will be stuck there.
As demonstrated in Fig.~\ref{fig:beale},
which shows
the same quantities as Fig.~\ref{fig:powell1}
obtained with the W4SV method with $\Delta\tau=0.5$ for this problem,
our new method is able to reach the right answer without taking any special measure.
We choose two singular states as the initial condition:
$(1,1)^T$ and $(0,2)^T$, the results of
which are displayed with circles and pluses, respectively.
\begin{figure}[t]
 \begin{tabular}{cc}
  \includegraphics[width=8.cm,clip]{./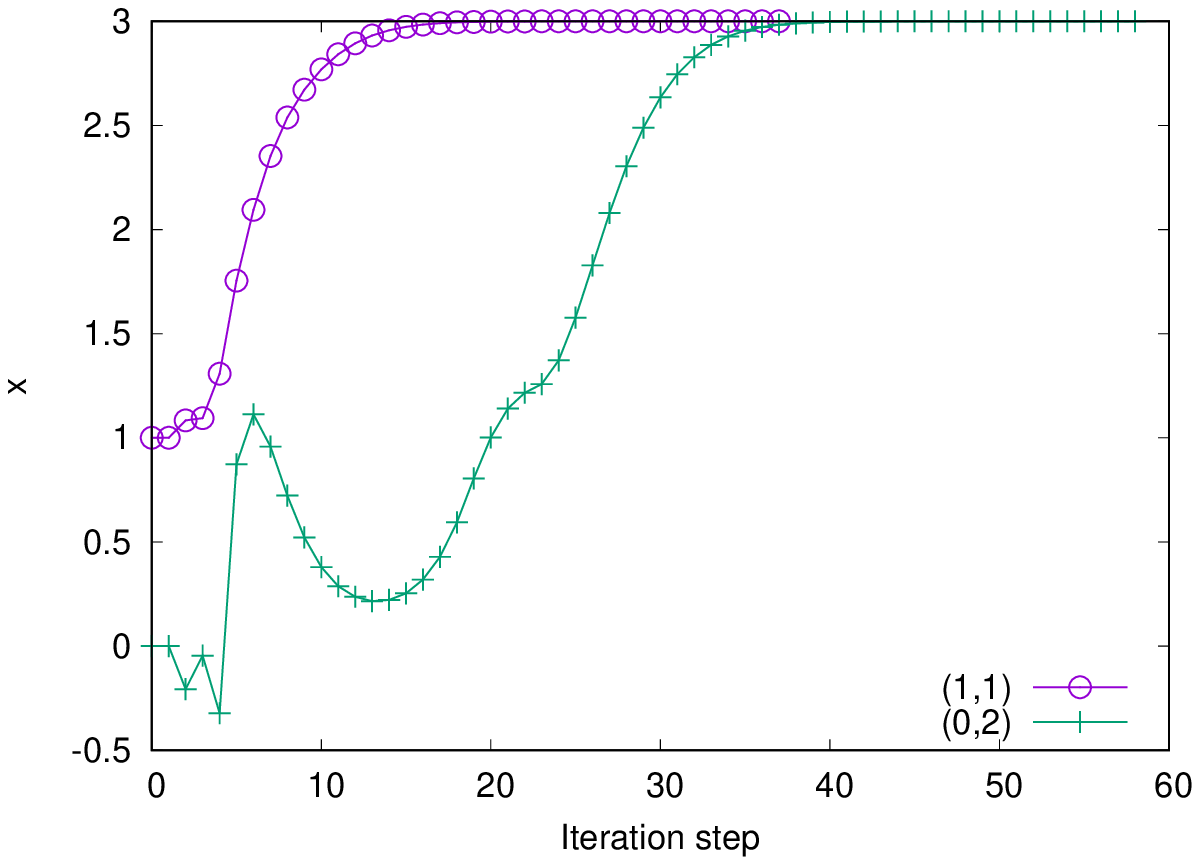} &
  \includegraphics[width=8.cm,clip]{./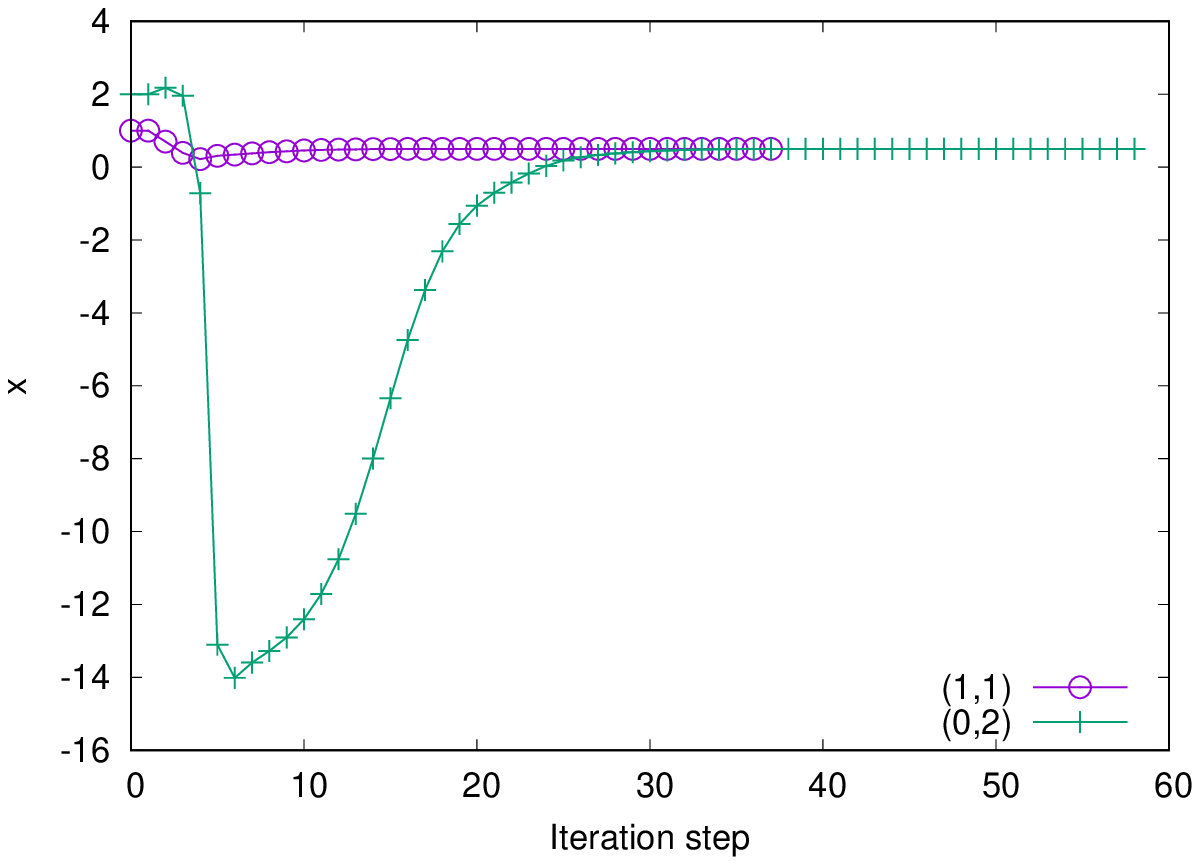} \\
  (a) & (b)\\
  \includegraphics[width=8.cm,clip]{./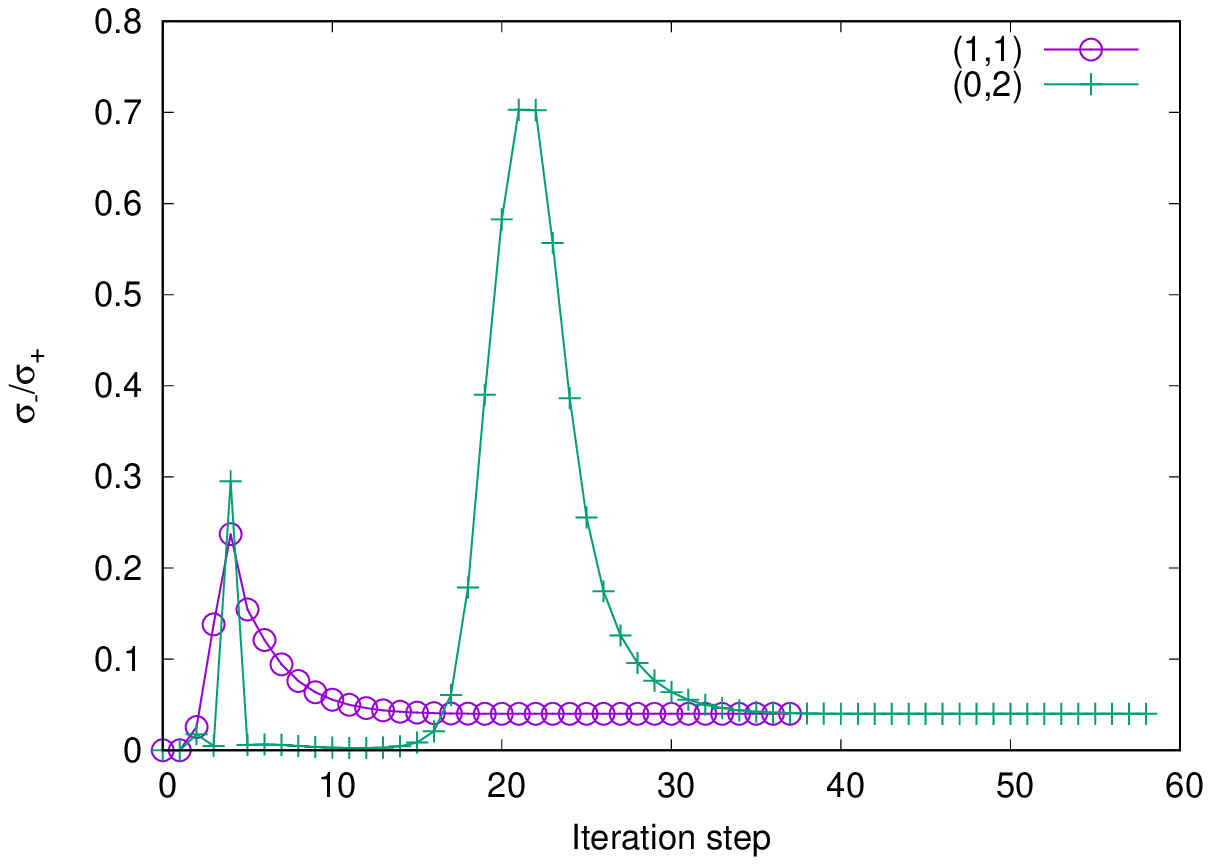} &
  \includegraphics[width=8.cm,clip]{./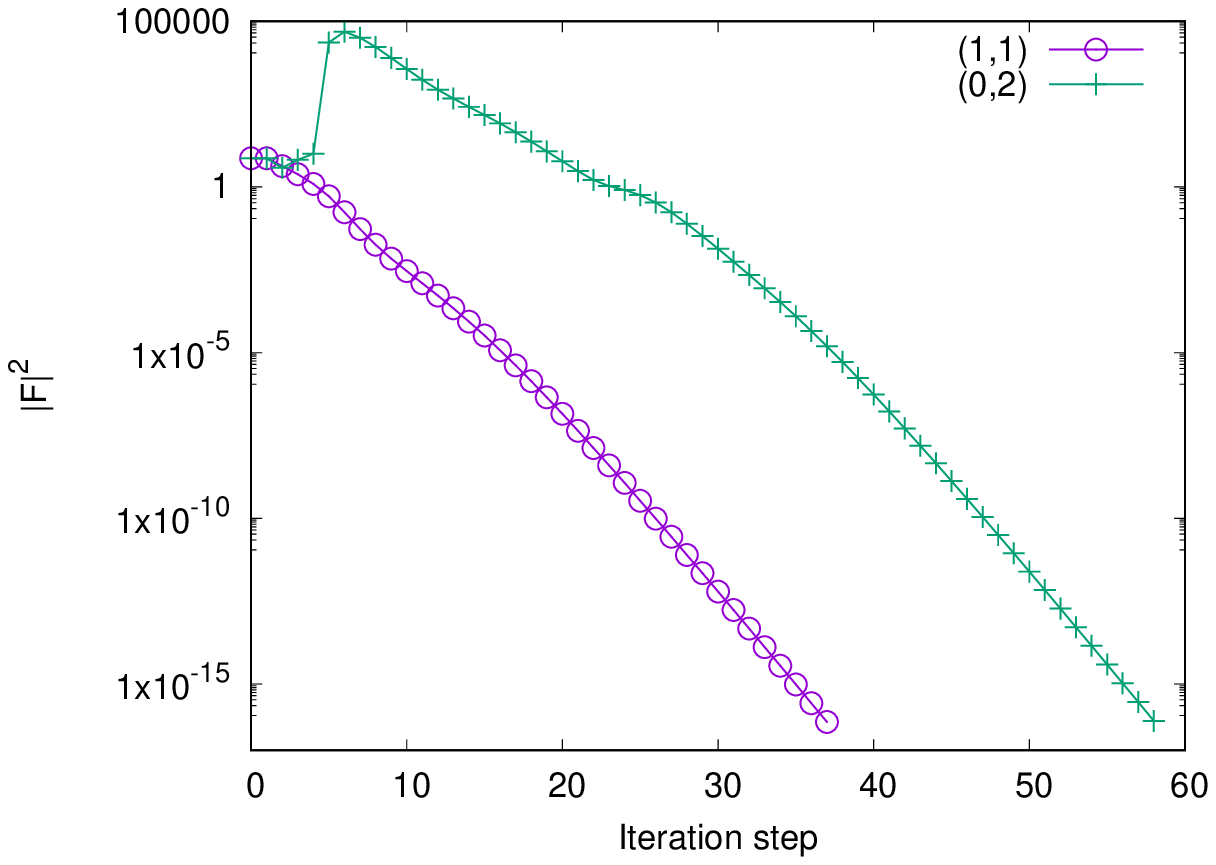} \\
  (c) & (d)
 \end{tabular}
 \caption{Same as Figs.~\ref{fig:powell1} and \ref{fig:powell2}
 but for Beale's problem~\eqref{eq:beale}.
 We choose $\Delta\tau=0.5$ for this problem.
 Circles and pluses denote the results 
 for the different initial conditions:
 $(x,y)^T=(1,1)^T$ and $(x,y)^T=(0,2)^T$, respectively.
 }
 \label{fig:beale}
\end{figure}

\subsection{Fujisawa's function}
In order to further check the capability of singular avoidance
by the W4SV method,
we apply it to another numerically tough problem 
in Eq.~\eqref{eq:fujisawa}, which we considered in our previous
paper~\cite{Okawa:2018smx}.
The Jacobian matrix is expressed as
\begin{eqnarray}
 J=
 \begin{pmatrix}
  2x & 2y \\
  2xy & x^2
 \end{pmatrix},
\end{eqnarray}
the determinant of which is $\det J = 2x(x^2-2y^2)$.
The initial condition with either $x=0$ or $x=\pm\sqrt{2}y$
is hence singular.
Even if the initial condition is nonsingular,
we know that this problem is difficult to solve numerically
 from initial conditions given below the lines~$x=\pm\sqrt{2}y$,
because it is likely that one of these singular points is encountered at
some intermediate step during the iteration~\cite{Okawa:2018smx}.
In Fig.~\ref{fig:fujisawa}, we show the results obtained with the W4SV method with $\Delta\tau=0.5$.
The initial condition is either~$(0,1)^T$, or $(0,-1)^T$.
It is obvious that small singular values,
that are encountered from time to time during the iterations,
affect the intermediate solution at the next iteration step
particularly
when they are still far from the true solution as seen, for
example,
 at around the $30\mathrm{th}$ iteration step in Fig.~\ref{fig:fujisawa}.
It is also found, however, the W4SV method can avoid such situations
and reach the right solution eventually.
As in the previous case,
the error decreases monotonically in the vicinity of the solution.
\begin{figure}[t]
 \begin{tabular}{cc}
  \includegraphics[width=8.cm,clip]{./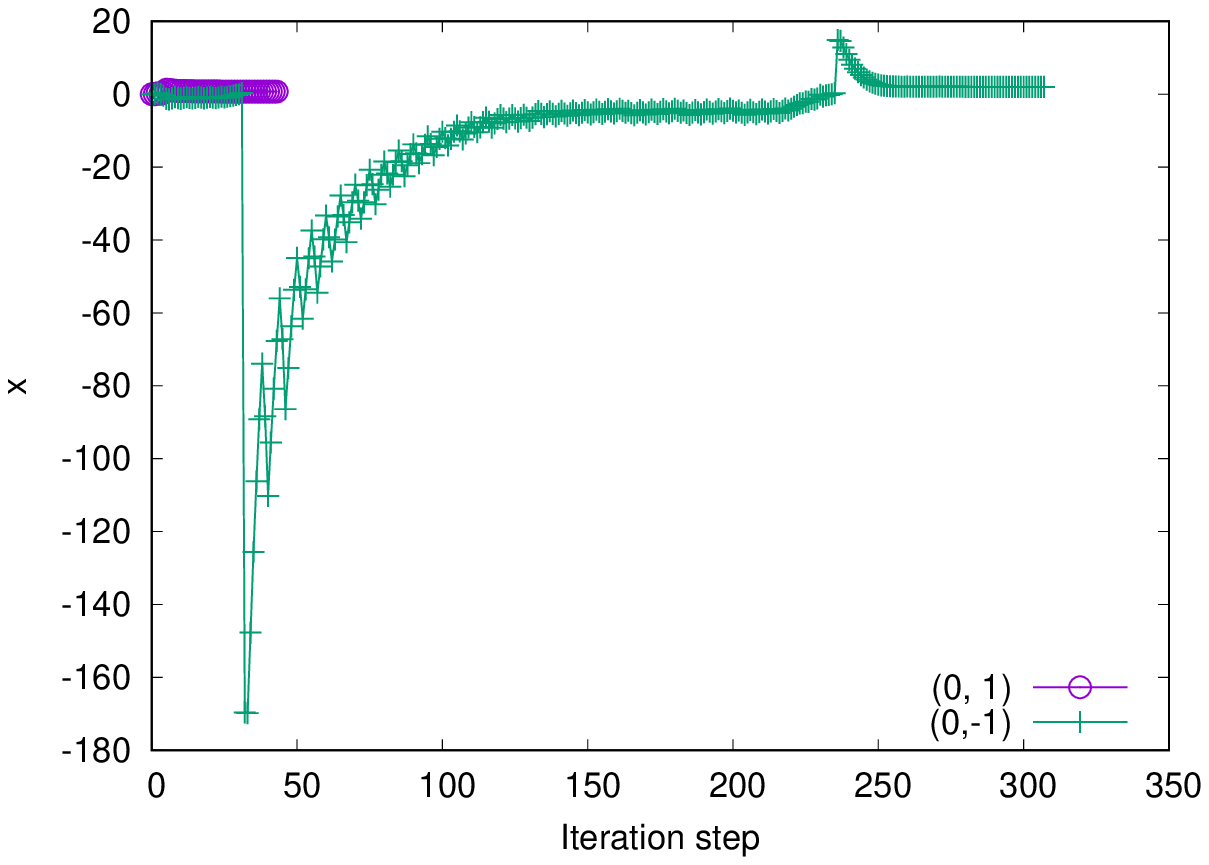} &
  \includegraphics[width=8.cm,clip]{./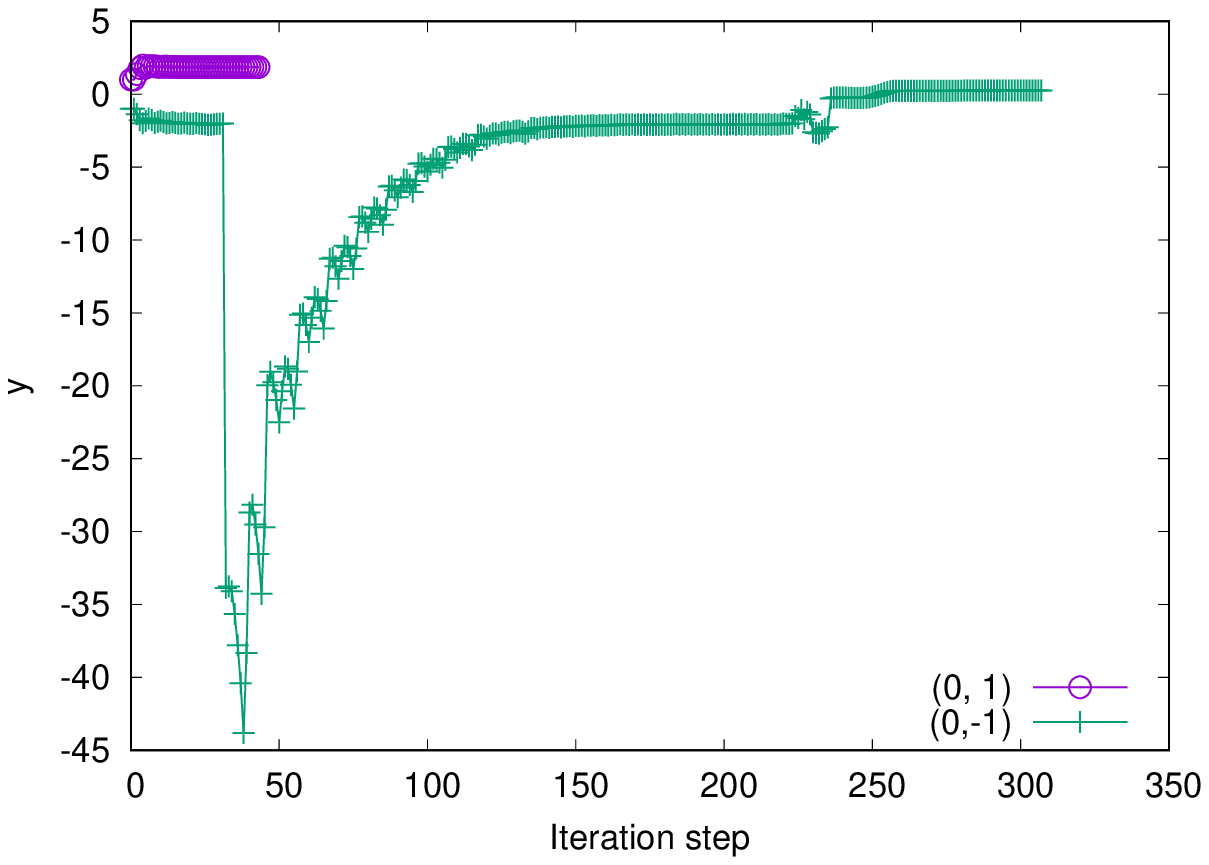}\\
  (a) & (b)\\
  \includegraphics[width=8.cm,clip]{./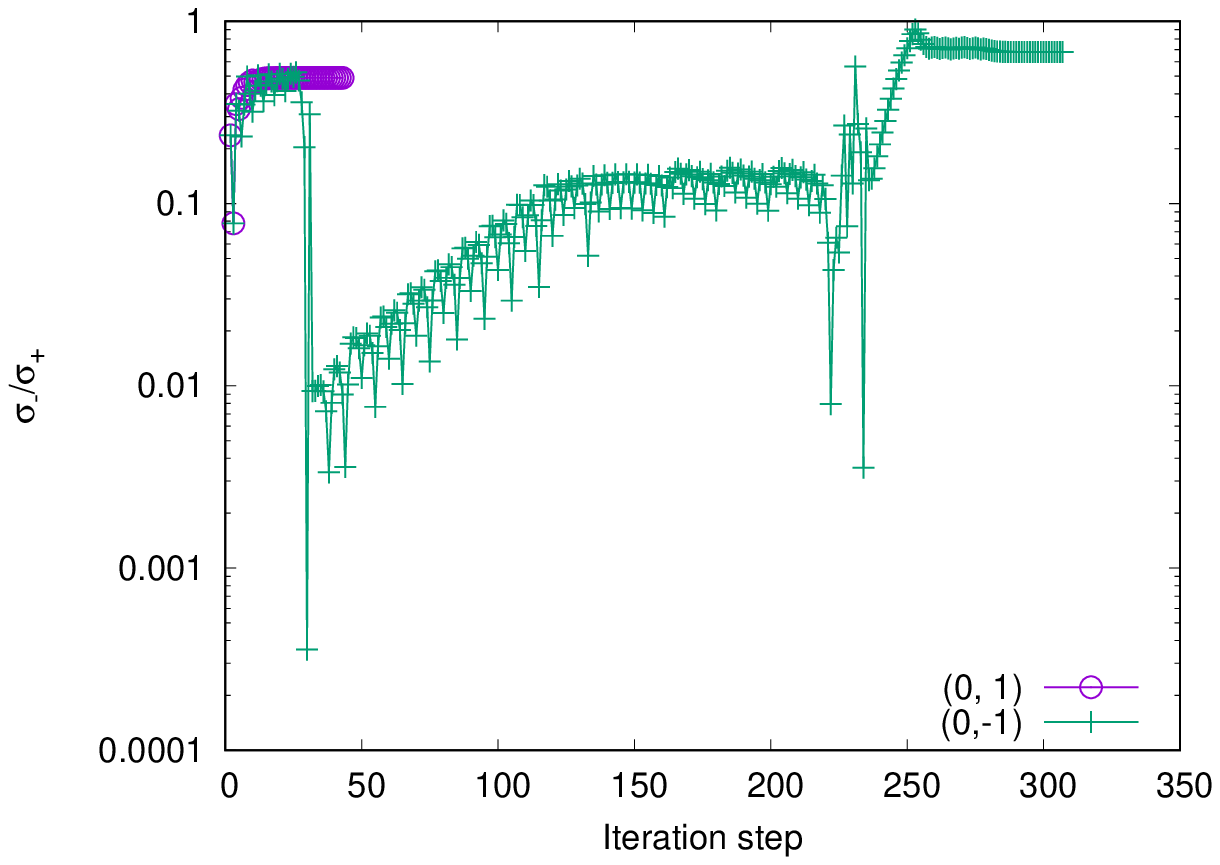} &
  \includegraphics[width=8.cm,clip]{./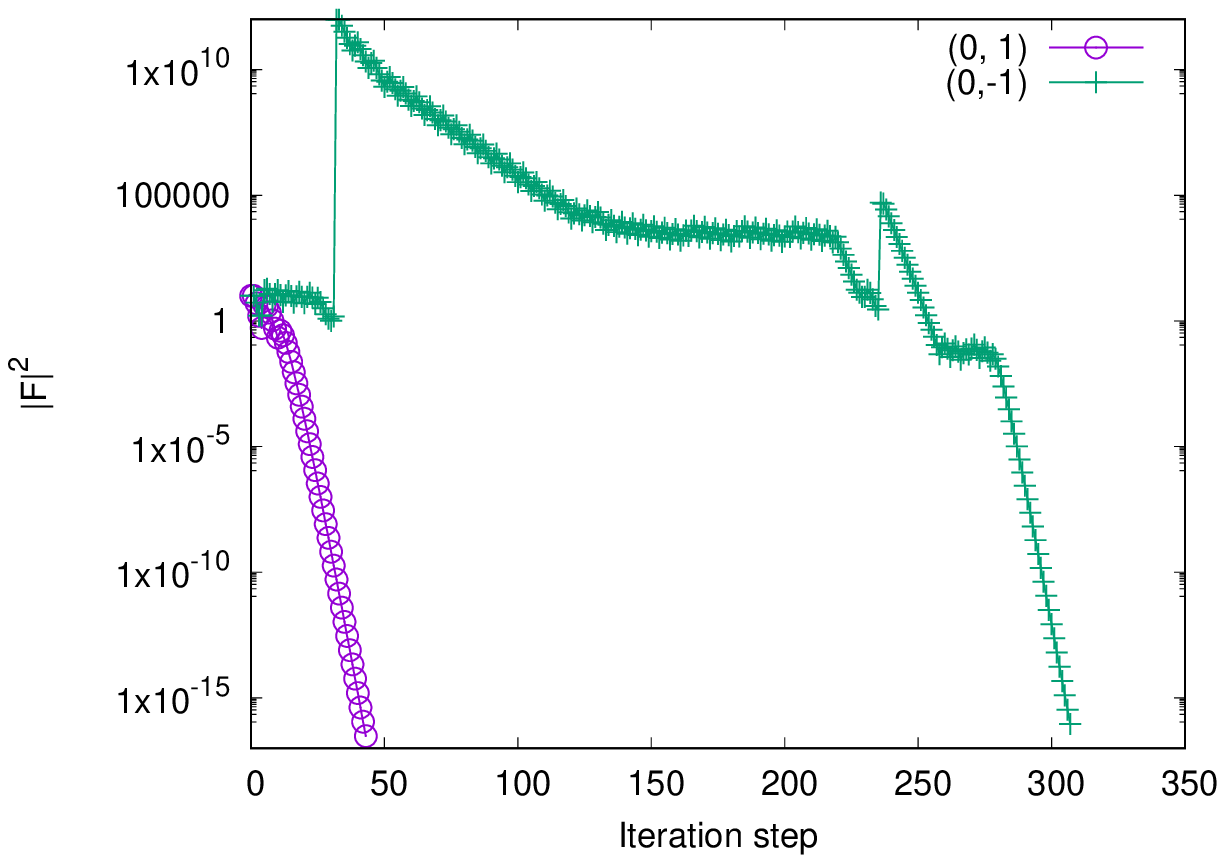}\\
  (c) & (d)
 \end{tabular}
 \caption{Same as Figs.~\ref{fig:powell1}-\ref{fig:beale}
 but for Fujisawa's function. We set $\Delta\tau=0.5$ for this test.
 The initial condition is either ~$(0,1)^T$ or $(0,-1)^T$.
 }
 \label{fig:fujisawa}
\end{figure}
%

%%%%%%%%%%%%%%%%%%%%%%%%%%%%%%%%%%%%%%%%%%%%%%%%%
\section{Conclusion}\label{sec:conclusion}
%%%%%%%%%%%%%%%%%%%%%%%%%%%%%%%%%%%%%%%%%%%%%%%%%
%
In this article, we have proposed a new scheme to solve
a set of nonlinear equations.
It is an extension of the W4 method,
 a root-finder of our own devising that
shows a nice global convergence and obtain solutions
to various problems,
for which existing methods such as the Newton-Raphson method
failed~\cite{Okawa:2018smx}.
The original W4 method has been successfully
applied to
different physics problems 
so far~\cite{Fujisawa:2018dnh,Suzuki:2020zbg,Hirai:2020sjg}.
The extension reported in this article
is meant to deal with the singular Jacobian,
which we frequently encounter in practical applications
and even the original W4 method finds difficulties
in treating.
This problem is actually common to all the existing
root-finders that require the inversion of Jacobian.
In the W4SV method, however,
the iteration map is always well-defined and can reach the
solution even for problems, for which
the Jacobian is singular at the initial or intermediate step
of iterations or at the true solution
unless the very special condition~\eqref{eq:condition}
is satisfied.
The results of the numerical tests in Sec.~\ref{sec:results}
for the well-known problems, albeit in two dimensions,
strongly support the excellent capability of our new scheme.

In principle, our new scheme should be applicable to
larger-dimensional problems, which often appear in computational science, physics and engineering.
For efficient applications to those problems, however,
it is necessary to
(i) treat non-singular but ill-conditioned Jacobians 
more efficiently
and (ii) reduce the computational cost in calculating
singular vectors and singular values,
which is the most cost-consuming part when the number of variables becomes larger.
We will address these issues in the near future.

%%%%%%%%%%%%%%%%%%%%%%%%%%%%%%%%%%%%%%%%%%%%%%%%%%%%%%%%%%%%%%%%%%%%
\section*{Acknowledgements}
%%%%%%%%%%%%%%%%%%%%%%%%%%%%%%%%%%%%%%%%%%%%%%%%%%%%%%%%%%%%%%%%%%%%
The work was supported by JSPS KAKENHI Grant Numbers JP20K14512,
JP20K03953, JP20H04728
and by Waseda University Grant for Special Research Projects(Project
number: 2019C-640 and 2020-C273).
S.Y. is supported by Institute for Advanced Theoretical and Experimental
Physics.

\appendix
\section{Test problems}\label{sec:problem}
In the literature, there are many test problems
for root-finders of nonlinear equation systems.
We summarize here
the two dimensional test problems.
The problems to solve are written in general as 
$\vF(x,y) = \left(f_x(x,y), f_y(x,y)\right)^T = \vzero$
together with the initial condition $\bx_0$ for the iteration.
\begin{enumerate}
 \item Rosenbrock's problem:
       \begin{eqnarray}
	f_x(x,y) &=& 10(y-x^2),\nonumber\\
	f_y(x,y) &=& 1-x,\nonumber\\
	\vx_{0} &=& (1.2, 1)^T.\label{eq:rosenbrock}
       \end{eqnarray}
 \item Freudenstein and Roth's problem:
       \begin{eqnarray}
	f_x(x,y) &=& -13 +x +\left( (5-y)y -2 \right)y,\nonumber\\
	f_y(x,y) &=& -29 +x +\left( (y+1)y -14 \right)y,\nonumber\\
	\vx_{0} &=& (6,3)^T.\label{eq:freudenstein}
       \end{eqnarray}
 \item Powell's badly scaled problem:
       \begin{eqnarray}
	f_x(x,y) &=& 10^4xy -1,\nonumber\\
	f_y(x,y) &=& e^{-x} +e^{-y} -1.0001,\nonumber\\
	\vx_{0} &=& (0,1)^T\ \mathrm{or}\ \vx_{0}=(1,1)^T.\label{eq:powell}
       \end{eqnarray}
 \item Brown's badly scaled problem:
       \begin{eqnarray}
	f_x(x,y) &=& xy^2 -2y +x -10^6,\nonumber\\
	f_y(x,y) &=& x^2y -2x +y -2\times 10^{-6},\nonumber\\
	\vx_{0} &=& (1,1)^T.\label{eq:brown}
       \end{eqnarray}
 \item Beale's problem:
       \begin{eqnarray}
	f_x(x,y) &=& 1.5 -x(1-y),\nonumber\\
	f_y(x,y) &=& 2.25 -x(1-y^2),\nonumber\\
	\vx_{0} &=& (1,1)^T\ \mathrm{or}\ \vx_{0}=(0,2)^T.\label{eq:beale}
       \end{eqnarray}
 \item[A.] Hueso \& Monteiro's problem:
       \begin{eqnarray}
	f_x(x,y) &=& (x-1)^2(x-y),\nonumber\\
	f_y(x,y) &=& (y-2)^5\cos(2x/y),\nonumber\\
	\vx_{0} &=& (1.5,2.5)^T.\label{eq:hueso}
       \end{eqnarray}
 \item[B.] Fujisawa's problem:
       \begin{eqnarray}
	f_x(x,y) &=& x^2 +y^2 -4,\nonumber\\
	f_y(x,y) &=& x^2y -1,\nonumber\\
	\vx_{0} &=& (0,1)^T\ \mathrm{or}\ \vx_{0}=(0,-1)^T.\label{eq:fujisawa}
       \end{eqnarray}
\end{enumerate}

%%%%%%%%%%%%%%%%%%%%%%%%%%%%%%%%%%%%%%%%%%%%%%%%%%%%%%%%%%%%%%%%%%%%%%%%%%%%%%%%%%%%%%%%%%%
\bibliographystyle{elsarticle-num}
\bibliography{nonlinear}

\end{document}